\newcommand{\bb}{\mathbb}
\newcommand{\mc}{\mathcal}
\newcommand{\abs}[1]{\lvert #1 \rvert}
\newcommand{\norm}[1]{\lvert\lvert #1 \rvert\rvert}
\newcommand{\minus}{\,\backslash\,}
\newcommand{\weak}{\rightsquigarrow}
\renewcommand{\tilde}{\widetilde}
\newcommand{\flip}{\overline}
\DeclareMathOperator{\rank}{rank}
\DeclareMathOperator{\corank}{corank}
\DeclareMathOperator{\sign}{sign}
\theoremstyle{definition}
\newtheorem{thm}{Theorem}[section]
\newtheorem{prop}[thm]{Proposition}
\newtheorem{defn}[thm]{Definition}
\numberwithin{equation}{section}
\title{A counterexample to the extension space conjecture for realizable oriented matroids}
\author{Gaku Liu}
\begin{document}
\maketitle

\begin{abstract}
The extension space conjecture of oriented matroid theory states that the space of all one-element, non-loop, non-coloop extensions of a realizable oriented matroid of rank $d$ has the homotopy type of a sphere of dimension $d-1$. We disprove this conjecture by showing the existence of a realizable uniform oriented matroid of high rank and corank 3 with disconnected extension space.
\end{abstract}

\section{Introduction}

Given an oriented matroid $M$, the set of all one-element, non-loop, non-coloop extensions of $M$ has a natural poset structure, and the order complex of this poset is called the \emph{extension space} $\mc E(M)$ of $M$. A long-standing and central open question in oriented matroid theory is the \emph{extension space conjecture}, which states that if $M$ is realizable, then $\mc E(M)$ is homotopy equivalent to a sphere of dimension $\rank(M) - 1$. 

Sturmfels and Ziegler \cite{SZ93} proved this conjecture for a class of oriented matroids which they called \emph{strongly Euclidean} oriented matroids, which includes all oriented matroids of rank at most 3 or corank at most 2. However, Santos \cite{San01} showed that realizable oriented matroids which are not strongly Euclidean exist both in rank 4 and corank 3. Mn\"{e}v and Richter-Gebert \cite{MR93} showed that the conjecture is false if one removes the realizability assumption on $M$; they constructed non-realizable oriented matroids of rank 4 with disconnected extension spaces. In this paper, we disprove the extension space conjecture by showing that there exists a realizable uniform oriented matroid of high rank (possibly around $10^5$) and corank 3 with disconnected extension space.

Geometrically, one can view the extension space conjecture as an attempt to understand how the space of realizable oriented matroids is embedded in the space of all oriented matroids. In particular, if one fixes a realization $X$ of $M$, the oriented matroids which can be realized as  one-element extensions of $X$ form a poset which is isomorphic to the face lattice of the boundary of a polytope, and hence has spherical order complex $\mc E(X)$. It was hoped that $\mc E(X)$ might be a good representation of the topology of the entire extension space of $M$; in fact, the extension space conjecture is equivalent to the claim that $\mc E(X)$ is a strong deformation retract of $\mc E(M)$ \cite{AS02}. However, our result shows that for very large oriented matroids things can be more complicated. It would be interesting to better understand under what conditions and by how much the topologies of $\mc E(X)$ and $\mc E(M)$ can differ.

Before describing the counterexample, we discuss two problems which are also resolved as a result of this counterexample.

\paragraph{Combinatorial Grassmannians}

The extension space conjecture is a special case of a far-reaching conjecture by MacPherson, Mn\"{e}v, and Ziegler \cite{MZ93} on ``combinatorial Grassmannians.'' Any set $\mc S$ of oriented matroids on the same ground set can be turned into a topological space by considering the order complex of the usual poset structure on $\mc S$, as above. If $M$ is an oriented matroid and $\mc S$ is the set of all rank $r$ \emph{strong images} \cite[Sec.\ 7.7]{BLSWZ} of $M$, then the resulting space is known as the \emph{combinatorial Grassmannian} $\mc G(r, M)$. These spaces (especially the case where $M$ is the rank $n$ oriented matroid on $n$ elements, in which case this space is known as the \emph{MacPhersonian}) can be thought of as combinatorial models for the real Grassmannian and play important roles in the theories of \emph{combinatorial differential manifolds} and \emph{matroid bundles}; see \cite{Mac93} and \cite{And99}. The basic problem surrounding these objects is whether or not they are topologically similar to their real counterpart, the real Grassmannian $G(r, \bb R^n)$. MacPherson, Mn\"{e}v, and Ziegler conjectured that if $M$ is realizable of rank $n$, then $\mc G(r, M)$ is homotopy equivalent to $G(r, \bb R^n)$. The special case $r = n-1$ of this conjecture is equivalent, through some minor additional arguments, to the extension space conjecture for $M$. Our result is thus also a counterexample to the combinatorial Grassmannian conjecture.

An important remaining open question is whether the MacPhersonian is homotopy equivalent to the appropriate Grassmannian. A positive answer would have ramifications in the application of combinatorial differential manifolds to the study of smooth manifolds. Our result, however, could be taken as evidence against the conjecture, and at the very least demonstrates the combinatorial subtleties underlying the problem.

\paragraph{The generalized Baues problem}

The extension space conjecture is also a special case of the \emph{generalized Baues problem} in the theory of fiber polytopes \cite{BKS94}. This problem studies general classes of polytopal subdivisions which are ``induced'' by some projection of polytopes; these classes of subdivisions include triangulations of polytopes, zonotopal tilings, and monotone paths on polytopes. Given a polytope and a class of subdivisions, the set of such subdivisions of this polytope form a poset and associated order complex, and thus can be studied as a topological space; this is the goal of the generalized Baues problem. The problem is motivated by the fact that if one restricts this poset to a certain set of subdivisions called \emph{coherent} subdivisions, one obtains the face lattice of a polytope known as the \emph{fiber polytope} \cite{BS92}. The conjecture is that the homotopy type of the space does not change when one includes the non-coherent subdivisions.

This area of research proved very fruitful and led to major developments in the understanding of \emph{flip graphs}, which are certain graphs connecting the finest subdivisions of a polytope. See the survey \cite{Rei99} for an overview and the paper \cite{San06} and book \cite[Chpt.\ 7, 9.1]{DRS} for more recent results. The connection to the extension space conjecture is as follows: Via the Bohne-Dress theorem \cite[Thm.\ 7.32]{Zie95}, the extension space of a realizable oriented matroid $M$ is isomorphic to the space of all non-trivial zonotopal tilings of the zonotope associated to the dual of $M$. The extension space conjecture is equivalent to the ``generalized Baues conjecture for cubes,'' which states that for any zonotope, the aforementioned space is homotopy equivalent to a sphere. After Rambau and Ziegler disproved the most general form of the generalized Baues conjecture \cite{RZ96} and Santos disproved the more particular ``generalized Baues conjecture for simplices'' \cite{San06}, the generalized Baues conjecture for cubes remained as (possibly) the last unresolved case of interest for the problem. Our result disproves this case by giving a three-dimensional zonotope whose space of non-trivial zonotopal tilings is disconnected.

\vspace{12pt}

The counterexample in this paper is based on a vector configuration used by the author in \cite{Liu16} to construct a zonotope whose flip graph of zonotopal tilings is not connected. This vector configuration is formed by taking the set $\{e_i - e_j : 1 \le i < j \le 4\}$, where $e_i$ is the $i$-th standard basis vector, and repeating each vector in the set a large number of times. Call this configuration $E_N$, where $N$ is the number of times each vector is repeated. Let $\tilde{E}_N$ be a configuration obtained by perturbing each vector in $E_N$ by a small random displacement in the span of $E_N$. Our result is the following.

\begin{thm}
For large enough $N$, with probability greater than 0, $\tilde{E}_N$ contains a subconfiguration $E$ such that the oriented matroid dual to the oriented matroid of $E$ has disconnected extension space.
\end{thm}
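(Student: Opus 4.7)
The plan is to bootstrap from the author's earlier paper \cite{Liu16}, which shows that for large $N$ the flip graph of zonotopal tilings of $Z(E_N)$ is disconnected. Since every non-trivial zonotopal tiling is a coarsening of a finest one, the order complex of the full tiling poset is path-connected if and only if its flip graph on maximal elements is. The Bohne-Dress theorem then identifies the space of non-trivial zonotopal tilings of $Z(E_N)$ with the extension space $\mc E(E_N^*)$, so \cite{Liu16} already gives a realizable oriented matroid $E_N^*$ with disconnected extension space. The remaining work is to upgrade this highly non-uniform counterexample to a uniform one, which is the form of central interest for the conjecture.

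Let $\tilde{E}_N$ be a generic random perturbation of $E_N$ inside its $3$-dimensional span. With probability $1$, no three vectors of $\tilde{E}_N$ are linearly dependent, so $\tilde{E}_N$ and all its subconfigurations give uniform oriented matroids. The strategy for producing $E \subseteq \tilde{E}_N$ is to extract from the disconnectedness proof of \cite{Liu16} a \emph{finite, local} combinatorial certificate: two one-element extensions $p_1, p_2$ of $E_N^*$ in distinct components of $\mc E(E_N^*)$, together with an invariant $\Phi$ such that (a) $\Phi(p)$ depends only on the oriented matroid structure of $p$ relative to some bounded subset $F \subseteq E_N$, (b) $\Phi(p_1) \neq \Phi(p_2)$, and (c) $\Phi$ is conserved along every edge of the extension-space order complex. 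Such a certificate transfers to the perturbed setting: any subconfiguration $E \subseteq \tilde{E}_N$ containing the perturbed analogs of $F$ admits extensions $\tilde{p}_1, \tilde{p}_2$ with $\Phi(\tilde{p}_1) \neq \Phi(\tilde{p}_2)$ computed relative to $E$, and the conservation in (c) forces them into distinct components of $\mc E(E^*)$. Since only finitely many vectors are involved and only finitely many genericity conditions need to be satisfied, this occurs with positive probability.

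The hard part is extracting such a local invariant $\Phi$: the disconnectedness in \cite{Liu16} is a priori a global feature of the tiling poset of $Z(E_N)$, and one must show that the obstruction to connecting the witnessing tilings is already visible within a bounded-size subconfiguration. I would attempt this by exploiting the $A_3$ root-system symmetry of $E_N$, realizing $\Phi$ as a combinatorial winding number or parity reading off how the extension $p$ sits relative to certain ``test'' circuits of $E_N^*$ supported on $F$. Provided $\Phi$ can be phrased purely in terms of sign data on $F$, its robustness under small perturbations becomes automatic, and the remainder of the argument reduces to verifying that a choice of $E$ achieving $\Phi(\tilde{p}_1) \neq \Phi(\tilde{p}_2)$ exists with positive probability.
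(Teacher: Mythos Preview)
Your first step contains a genuine error: it is \emph{not} true that the order complex of $\mc F(M)\setminus\hat 0$ (equivalently, the space of non-trivial zonotopal tilings) is path-connected if and only if the flip graph on maximal elements is. Two fine tilings can fail to be flip-connected yet still lie in the same component of the order complex, joined through coarser tilings. This is precisely why the paper cannot conclude that $\mc E(E_N^*)$ itself is disconnected from the result of \cite{Liu16}; instead it proves Proposition~\ref{graphtospace}, which from a disconnected flip graph $G(M)$ only extracts \emph{some restriction} $M\vert_A$ whose lifting poset is disconnected. So \cite{Liu16} alone does not already produce a realizable oriented matroid with disconnected extension space, and your bootstrap has no foundation to stand on.

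Your second step, extracting a finite ``local invariant'' $\Phi$ that survives perturbation, is a plan rather than a proof, and the paper does not proceed this way. The actual argument works directly with the perturbed configuration $\tilde M$ from the start and never transfers anything from $E_N$. The perturbations are not merely generic: each $\eta_e$ has a dominant part $u_e$ drawn uniformly from a specific six-element set $\Delta$, and separately random elements $g_e$ of a finite group are used to build a particular lifting $\tilde{M'}$. A probabilistic union bound (Propositions~\ref{probabilities}--\ref{positiveprob}) shows that with positive probability a set $\Omega\subseteq Q^\star$ satisfies certain density conditions; conditional on this, one defines a set $G^\dagger$ of uniform liftings of $\tilde M$ by an explicit list of cocircuit constraints indexed by all of $\Omega$ (not a bounded subset), checks $\tilde{M'}\in G^\dagger$, and proves $G^\dagger$ is closed under flips by a delicate case analysis using Proposition~\ref{flipblock}. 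Only then does Proposition~\ref{graphtospace} yield the subconfiguration $E$. The invariant is thus global over $\Omega$ and tailored to the structured randomness; there is no indication that a bounded-support $\Phi$ of the kind you describe exists, and your sketch gives no mechanism for producing one.
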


The strategy of the proof is to show that the flip graph (Section~\ref{sec:flips}) of all uniform one-element extensions of the dual oriented matroid of $\tilde{E}_N$ is disconnected. A feature of the proof is that it uses probabilistic arguments to show the existence of certain elements in the flip graph; the value of $N$ required for these arguments to work is roughly $10^5$. We then use a known trick (Proposition~\ref{graphtospace}) to convert disconnectedness of flip graphs to disconnectedness of entire posets. Unfortunately, the trick only tells us that there is some subconfiguration $E \subseteq \tilde{E}_N$ whose dual oriented matroid has disconnected extension space, and does not tell us what $E$ is.

Section 2 gives the relevant background on oriented matroids. Section 3 is the main proof.

\section{Oriented matroids} \label{sec:om}

We will give a brief overview of oriented matroids. While this overview is self-contained, some familiarity with the basic concepts is helpful. We refer to Bj\"{o}rner et al.\ \cite{BLSWZ} or Richter-Gebert and Ziegler \cite{RZ04} for a more comprehensive treatment.

\subsection{Basic definitions}

Throughout Section~\ref{sec:om}, let $E$ be a finite set. Let $\{+,-,0\}$ be the set of \emph{signs}, and let $\{+,-,0\}^E$ be the set of \emph{sign vectors} on $E$. For $\alpha \in \{+,-,0\}$, define $-\alpha \in \{+,-,0\}$ in the obvious way. For $X \in \{+,-,0\}^E$, define $-X \in \{+,-,0\}^E$ such that $(-X)(e) = -X(e)$ for all $e \in E$. Define a partial order on $\{+,-,0\}$ by $0 < +$ and $0 < -$, and extend this to the product order on $\{+,-,0\}^E$.

An oriented matroid is a pair $(E, \mc L)$ where $\mc L$ is a set of sign vectors on $E$ satisfying certain axioms. We will not use this axiomatic description in this paper, but we include it for completeness:

\begin{defn}
An \emph{oriented matroid} is a pair $M = (E, \mc L)$ where $\mc L \subseteq \{+,-,0\}^E$ such that
\begin{enumerate}
\item $(0,\dotsc,0) \in \mc L$
\item If $X \in \mc L$, then $-X \in \mc L$.
\item If $X$, $Y \in \mc L$, then $X \circ Y \in \mc L$, where
\[
(X \circ Y)(e)  =
\begin{dcases*}
X(e) & if $X(e) \neq 0$ \\
Y(e) & otherwise.
\end{dcases*}
\]
\item If $X$, $Y \in \mc L$ and $e \in E$ such that $\{X(e),Y(e)\} = \{+,-\}$, then there exists $Z \in \mc L$ such that $Z(e) = 0$ and $Z(f) = (X \circ Y)(f)$ whenever $\{X(f),Y(f)\} \neq \{+,-\}$.
\end{enumerate}
The set $\mc L$ is called the set of \emph{covectors} of $M$. A minimal element of $\mc L \minus \{0\}$ (with respect to the above product order) is called a \emph{cocircuit} of $M$, and the set of cocircuits is denoted $\mc C^\ast(M)$. Every covector of $M$ can be written as $X_1 \circ \dotsb \circ X_k$ where $X_1$, \dots, $X_k \in \mc C^\ast(M)$.
\end{defn}

Given an oriented matroid $M = (E, \mc L)$, an element $e \in E$ is a \emph{loop} of $M$ if $X(e) = 0$ for all $X \in \mc L$. An element $e \in E$ is a \emph{coloop} of $M$ if there is some $X \in \mc L$ with $X(e) = +$ and $X(f) = 0$ for all $f \in E \minus \{e\}$. An \emph{independent set} of $M$ is a set $\{e_1,\dotsc,e_k\} \subseteq E$ such that there exist $X_1$, \dots, $X_k \in \mc L$ with
\[
X_i(e_j) =
\begin{dcases*}
+ & if $i=j$ \\
0 & otherwise.
\end{dcases*}
\]
All maximal independent sets of $M$ have the same size, and this size is the \emph{rank} of $M$. The \emph{corank} of $M$ is $\abs{E} - \rank(M)$. An oriented matroid of rank $d$ is \emph{uniform} if all $d$-element subsets of $E$ are independent sets.


For $X$, $Y \in \{+,-,0\}^E$, we write $X \perp Y$ if the set $\{ X(e) \cdot Y(e) : e \in E \}$ is either $\{0\}$ or contains both $+$ and $-$. The next theorem defines \emph{duality} of oriented matroids.

\begin{thm}
For any oriented matroid $M = (E,\mc L)$ of rank $d$, the pair $M^\ast = (E,\mc L^\ast)$ where
\[
\mc L^\ast = \{ X \in \{+,-,0\} : X \perp Y \text{ for all } Y \in \mc L \}
\]
is an oriented matroid of rank $\abs{E} - d$, called the \emph{dual} of $M$. We have $M^{\ast\ast} = M$.
\end{thm}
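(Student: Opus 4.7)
The plan is to verify that $(E, \mc L^\ast)$ satisfies the four covector axioms, then establish the rank formula and the involutivity $M^{\ast\ast} = M$. This is a classical result, so I will follow the approach in \cite[Ch.~3]{BLSWZ}. Axioms (1) and (2) are immediate, since the zero sign vector is perpendicular to everything and $\perp$ is stable under negation of either argument.

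Axioms (3) and (4) are the heart of the matter, and a direct verification is surprisingly delicate: even showing $X \circ X' \perp Y$ from $X, X' \in \mc L^\ast$ and $Y \in \mc L$ is not a one-line case analysis, since a witness of opposite sign produced by $X' \perp Y$ may be overridden when $X$ has a nonzero value there, forcing one to recurse via $X \perp Y$. I would bypass this by first constructing $M^\ast$ through the chirotope description, then identifying its covector set with $\mc L^\ast$ a posteriori.

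Concretely, I would encode $M$ by its chirotope $\chi \colon E^d \to \{+,-,0\}$ and define the dual chirotope $\chi^\ast$ on complementary $(\abs{E}-d)$-tuples by the sign-twist $\chi^\ast(i_1,\dotsc,i_{\abs{E}-d}) = \sign(\sigma)\,\chi(j_1,\dotsc,j_d)$, where $(j_1,\dotsc,j_d)$ enumerates the complementary indices and $\sigma$ is the permutation sorting the concatenation. The main technical obstacle will be verifying the three-term Grassmann--Plücker sign relations for $\chi^\ast$ from those for $\chi$; this is pure but unpleasant sign bookkeeping. Once this is done, $\chi^\ast$ defines an oriented matroid $M' = (E,\mc L')$ of rank $\abs{E}-d$ whose cocircuits are explicitly expressible in terms of $\chi^\ast$. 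The inclusion $\mc L' \subseteq \mc L^\ast$ is a direct perpendicularity computation from the Grassmann--Plücker identities, and the reverse inclusion $\mc L^\ast \subseteq \mc L'$ follows by showing that any sign vector perpendicular to all of $\mc L$ must be a composition of the cocircuits of $M'$. This yields axioms (3) and (4) for $\mc L^\ast$ along with the rank formula, simultaneously.

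For involutivity, the inclusion $\mc L \subseteq \mc L^{\ast\ast}$ is immediate from the symmetry of $\perp$, and equality then follows from the identity $\chi^{\ast\ast} = \pm \chi$, a routine permutation sign computation. The whole proof thus reduces, modulo the Grassmann--Plücker bookkeeping for $\chi^\ast$, to this involutivity check at the chirotope level.
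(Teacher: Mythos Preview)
The paper does not prove this theorem at all: it is stated in Section~2 as background material, with the surrounding text explicitly deferring to \cite{BLSWZ} for details. So there is no ``paper's own proof'' to compare against; you are supplying a proof where the author chose to quote the result.

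As an outline, your chirotope route is the standard one and is sound. One step you pass over quickly deserves a flag: the inclusion $\mc L^\ast \subseteq \mc L'$, i.e.\ that every sign vector orthogonal to all covectors of $M$ is already a composition of circuits of $M$, is not a formality. It is essentially the oriented-matroid Farkas lemma (cf.\ \cite[Prop.~3.7.12 and Cor.~3.4.6]{BLSWZ}), and its proof uses covector elimination in $M'$ together with the orthogonality of circuits and cocircuits that you have already established. If you intend to write this out in full, that is the place where the real work beyond Grassmann--Pl\"ucker sign bookkeeping lives; the rest of your plan (axioms (1), (2), the rank count from complementary bases, and $\chi^{\ast\ast}=\pm\chi$) is routine.
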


Finally, if $M = (E, \mc L)$ is an oriented matroid and $A \subseteq E$, the pair $M \vert_A = (A, \mc L_A)$ where
\[
\mc L_A := \{ X \vert_A : X \in \mc L \}
\]
is an oriented matroid called the \emph{restriction} of $M$ to $A$.

\subsection{Topological representation}

For each $e \in E$, let $v_e \in \bb R^d$ be a vector. For each point $x \in \bb R^d$, we obtain a sign vector $X \in \{+,-,0\}^E$ by letting $X(e)$ be the sign of the inner product $\langle x , v_e \rangle$. The set of all such sign vectors is the set of covectors of an oriented matroid, which we call the \emph{oriented matroid of the vector configuration} $\{v_e\}_{e \in E}$. An oriented matroid is \emph{realizable} if it is the oriented matroid of some vector configuration.

Now assume that all of the $v_e$ above are nonzero, and let $S_e$ be the intersection of the hyperplane normal to $v_e$ with the unit sphere $S^{d-1}$. The $S_e$ form a \emph{sphere arrangement} of $(d-2)$-dimensional spheres in $S^{d-1}$, and each $S_e$ is oriented in the following way: $S_e$ separates $S^{d-1}$ into two hemispheres, exactly one of which has points with positive inner product with $v_e$. The \emph{topological representation theorem} says that all oriented matroids arise as topological deformations of such arrangements; we now describe this more precisely.

A \emph{pseudosphere} in $S^{d-1}$ is an image of $\{x \in S^{d-1} : x_d = 0\}$ under a homeomorphism $\phi : S^{d-1} \to S^{d-1}$. A psudosphere $S$ separates $S^{d-1}$ into two regions called \emph{sides}; if we choose one side to be $S^+$ and the other to be $S^-$, then we say that $S$ is \emph{oriented}. A \emph{pseudosphere arrangement} is a collection $\mc A = \{S_e\}_{e \in E}$ of oriented pseudospheres in $S^{d-1}$ such that
\begin{enumerate}
\item For all $A \subseteq E$, the set $S_A := \bigcap_{e \in A} S_e$ is homeomorphic to a sphere or empty.
\item If $A \subseteq E$ and $e \in E$ such that $S_A \not\subseteq S_e$, then $S_A \cap S_e$ is a pseudosphere in $S_A$ with sides $S_A \cap S_e^+$ and $S_A \cap S_e^-$.
\end{enumerate}

Let $\mc A = \{S_e\}_{e \in E}$ be a pseudosphere arrangement in $S^{d-1}$. For each $x \in S^{d-1}$, we obtain a sign vector $X \in \{+,-,0\}^E$ by setting
\[
X(e) =
\begin{dcases*}
+ & if $x \in S_e^+$ \\
- & if $x \in S_e^-$ \\
0 & if $x \in S_e$
\end{dcases*}
\]
Let $\mc L(\mc A)$ be the set of all sign vectors obtained this way along with the 0 sign vector. Call $\mc A$ \emph{essential} if $\bigcap_{e \in E} S_e = \emptyset$. We can now state the topological representation theorem.

\begin{thm}[Folkman-Lawrence \cite{FL78}]
For any essential pseudosphere arrangement $\mc A$ in $S^{d-1}$, $(E, \mc L(\mc A))$ is an oriented matroid of rank $d$. Conversely, every oriented matroid without loops is $(E, \mc L(\mc A))$ for some essential pseudosphere arrangement $\mc A$, and $\mc A$ is unique up to homeomorphisms $\phi : S^{d-1} \to S^{d-1}$.
\end{thm}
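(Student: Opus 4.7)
The plan is to handle the two directions separately. For the forward direction, I would directly verify the four covector axioms from the definition. The zero sign vector lies in $\mc L(\mc A)$ by fiat. For the composition axiom, given $x, y \in S^{d-1}$ realizing $X$ and $Y$, I would take a point $z$ obtained by perturbing $x$ slightly toward $y$ inside an appropriate stratum: on any $S_e$ with $X(e) \ne 0$, $z$ stays on the same side as $x$, while on any $S_e$ containing $x$, the perturbation moves $z$ to the $Y(e)$-side, so $z$ realizes $X \circ Y$. The elimination axiom reduces to an intermediate-value argument along a path from $x$ to $y$ inside the subsphere $S_A$ with $A = \{f : X(f) = Y(f) = 0\}$; by part (2) of the arrangement axioms each $S_e$ meets this path cleanly, and a generic transverse crossing yields $Z$. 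The symmetry $X \mapsto -X$ is the subtlest piece: I would induct on rank using the restrictions $\mc A \cap S_A$, invoking essentiality to reduce to the base case where the sphere itself guarantees an antipodal covector.

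For the much harder reverse direction, the standard approach is via the \emph{big face lattice}. Given a loopless oriented matroid $M = (E, \mc L)$ I would adjoin a top element $\hat 1$ to $\mc L$ (ordered by $X \le Y$ iff $X(e) \in \{0, Y(e)\}$ for every $e$) and verify from axioms (1)--(4) that the resulting poset is a graded lattice of length $d+1$ with the diamond property. The central topological step is to show that the order complex $\Delta(\mc L \minus \{0\})$ is homeomorphic to $S^{d-1}$. My plan is to construct this sphere by producing a shelling, equivalently a recursive coatom ordering, of the big face lattice, where the order is governed by cocircuit elimination. Once the sphere is in hand, for each $e \in E$ the subcomplex $\Delta(\{X \in \mc L \minus \{0\} : X(e) = 0\})$ sits as a codimension-one subcomplex, and the oriented matroid axioms force it to be a $(d-2)$-sphere with well-defined sides indexed by $\{X : X(e) = +\}$ and $\{X : X(e) = -\}$; essentiality is automatic from the construction because no element is a loop.

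For uniqueness, I would observe that two pseudosphere arrangements realizing the same oriented matroid induce combinatorially isomorphic regular CW decompositions of $S^{d-1}$, with cells indexed by covectors, and then invoke the standard fact that an isomorphism of regular CW decompositions of a sphere is realized by a self-homeomorphism $\phi : S^{d-1} \to S^{d-1}$.

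The main obstacle is unquestionably the reverse direction, and specifically proving that the big face lattice has the order complex of a sphere. All of the oriented matroid axioms, especially the cryptic elimination axiom, are engineered precisely to force this topological conclusion, so the proof must leverage them in an essential way. I would expect the delicate shellability/recursive-coatom construction to absorb the bulk of the technical effort, with the remaining verifications (verifying the pseudosphere conditions for each $e$, and checking the forward-direction axioms) being largely formal by comparison.
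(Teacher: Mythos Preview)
The paper does not prove this theorem at all: it is stated as background with a citation to Folkman and Lawrence \cite{FL78}, and the entire content of Section~2 is an overview of known oriented matroid theory used later in the construction. So there is no ``paper's own proof'' to compare your proposal against.

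That said, your outline is essentially the standard route to the topological representation theorem as presented, for instance, in \cite{BLSWZ}: verify the covector axioms for a pseudosphere arrangement by local perturbation and path arguments, and for the converse show that the big face lattice $\hat{\mc L} = \mc L \cup \{\hat 1\}$ is a thin, shellable lattice whose order complex on $\mc L \setminus \{0\}$ is a PL $(d-1)$-sphere, then read off each $S_e$ as the subcomplex where the $e$-coordinate vanishes. Two small remarks. First, your comment that symmetry $X \mapsto -X$ is the ``subtlest'' forward axiom is a bit off: once one knows the arrangement induces a regular CW decomposition of $S^{d-1}$ that is invariant under a suitable antipodal-like involution (which follows from the recursive structure in the arrangement axioms), symmetry is immediate; it is elimination that carries the real content. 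Second, the shellability result you invoke is historically due to Edmonds and Mandel rather than Folkman and Lawrence, whose original argument went through a different (and messier) inductive construction; either route is acceptable, but you should be aware that what you have sketched is the modern cleaned-up proof, not the 1978 original.
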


For an oriented matroid $M$, we call an essential pseudosphere arrangement $\mc A$ such that $M = (E, \mc L(\mc A))$ a \emph{topological representation} of $M$. If $M$ has rank $d$ and $\mc A = \{S_e\}_{e \in E}$ is a topological representation of $M$, we call any nonempty $S_A$ (where $A \subseteq E$) for which $\dim S_A > d-1-\abs{A}$ a \emph{special pseudosphere} of $\mc A$. $M$ is uniform if and only if $\mc A$ has no special pseudospheres. The cocircuits of $M$ are given by points of $S_A$ where $\dim S_A = 0$.

\subsection{Extensions, liftings, and weak maps} \label{sec:elw}

Let $M = (E, \mc L)$ be an oriented matroid. Let $M' = (E', \mc L')$ be another oriented matroid such that $E' = E \cup \{f\}$ for some $f \notin E$. 
We say that $M'$ is a \emph{one-element extension}, or \emph{extension}, of $M$ if $M = M' \vert_E$; that is,
\[
\mc L = \{ X \vert_E : X \in \mc L' \}.
\]
We say that $M'$ is a \emph{one-element lifting}, or \emph{lifting}, of $M$ if
\[
\mc L = \{ X \vert_E : X \in \mc L', X(f) = 0 \}.
\]
If $M'$ is an extension (or lifting) of $M$, we call it \emph{trivial} if $f$ is a coloop (resp., a loop) of $M'$. The notions of extension and lifting are dual to each other: $M'$ is a (non-trivial) extension of $M$ if and only if $(M')^\ast$ is a (non-trivial) lifting of $M^\ast$. Finally, if $M'$ is a non-trivial extension of $M$, then $\rank(M') = \rank(M)$, and if $M'$ is a non-trivial lifting of $M$, then $\rank(M') = \rank(M) + 1$.

We can understand liftings better using topological representation. Suppose $M'$ is a lifting of $M$, and assume $\rank(M) = d$ and $M$ and $M'$ have no loops. Let $\mc A = \{S_e\}_{e \in E'}$ be a topological representation of $M'$ in $S^d$; by applying an appropriate homeomorphism $\phi: S^d \to S^d$, we may assume $S_f = \{ x \in S^d : x_{d+1} = 0\}$ and $S_f^+ = \{ x \in S^d : x_{d+1} > 0\}$. Let $\mc A^+ = \{S_e \cap S_f^+\}_{e \in E}$.\footnote{Note that $M'$ is determined by $\mc A^+$. In addition, we have $S_e \neq S_f$ for all $e \neq f$, because otherwise, by the definition of a lifting, $e$ would be a loop of $M$.} Consider the ``gnomonic projection'' which maps $S_f^+$ to $\bb R^d$. The image of $\mc A^+$ under this map is a (not necessarily central) arrangement $\mc B$ of oriented pseudohyperplanes in $\bb R^d$ such that the intersection of $\mc B$ with the ``sphere at infinity'' (that is, $S_f$) is a pseudosphere arrangement representing $M$. Conversely, given such a pseudohyperplane arrangement $\mc B$ (with the appropriate definition of ``pseudohyperplane arrangment''), we can uniquely construct a lifting $M'$ of $M$ such that the set of covectors of $M'$ which are positive on $f$ is topologically represented by $\mc B$. Hence, liftings of $M$ are given by pseudohyperplane arrangements in $\bb R^d$ whose intersection with the sphere at infinity are topological representations of $M$.

Given two oriented matroids $M_1 = (E, \mc L_1)$ and $M_2 = (E, \mc L_2)$ on the same ground set $E$, we say that there is a \emph{weak map} $M_1 \weak M_2$ if for every $X_2 \in \mc L_2$, there exists $X_1 \in \mc L_1$ such that $X_1 \ge X_2$. We say that this weak map is \emph{rank-preserving} if $M_1$ and $M_2$ have the same rank. If $M_1 \weak M_2$ is a rank-preserving weak map, then $M_1^\ast \weak M_2^\ast$ is also a (rank-preserving) weak map \cite[Cor.\ 7.7.7]{BLSWZ}.

For any set $\mc S$ of oriented matroids on the same ground set, we obtain a partial order on $\mc S$ by letting $M_1 \ge M_2$ if there is a weak map $M_1 \weak M_2$. We call the set of all non-trivial extensions of an oriented matroid $M$ partially ordered this way the \emph{extension poset} $\mc E(M)$ of $M$. Similarly, we call the poset of all non-trivial liftings of $M$ the \emph{lifting poset} $\mc F(M)$ of $M$. Since all non-trivial extensions of an oriented matroid $M$ have the same rank, we have $\mc E(M) \cong \mc F(M^\ast)$. The extension poset (or lifting poset) has a unique minimal element $\hat 0$, corresponding to extension by a loop (resp., lifting by a coloop). 

Every poset has an associated \emph{order complex}, which is the simplicial complex whose simplices are finite chains of the poset. The \emph{extension space conjecture} claims that for any realizable oriented matroid $M$, the order complex of $\mc E(M) \minus \hat 0$ is homotopy equivalent to a sphere of dimension $\rank(M)-1$. Since an oriented matroid is realizable if and only if its dual is, this is equivalent to saying that the order complex of $\mc F(M) \minus \hat 0$ is homotopy equivalent to a sphere of dimension $\corank(M) - 1$ for any realizable $M$. We will find a realizable $M$ (with corank greater than 1) such that $\mc F(M) \minus \hat 0$ is disconnected.

\subsection{Flips} \label{sec:flips}

To prove disconnectedness of some $\mc F(M) \minus \hat 0$, we will actually only need to look at the maximal elements of $\mc F(M)$. If $M$ is uniform, the maximal elements of $\mc F(M)$ are precisely the uniform liftings of $M$. We will study these uniform liftings through \emph{flips}.\footnote{These are called \emph{mutations} in \cite[Sec.\ 7.3]{BLSWZ}. An equivalent discussion can be found there.}

The following propositions define a flip and its basic properties. They are easy to see from topological representation; we leave their proofs to the reader.

\begin{prop} \label{flipdef}
Let $M = (E, \mc L)$ be a uniform oriented matroid of rank $d$.
Let $D = \{e_1,\dotsc,e_d\}$ be a $d$-element subset of $E$. Let $X_1$, \dots, $X_d \in \mc L$ be cocircuits such that $X_i(e_j) = 0$ for all $i \neq j$. Suppose that
\[
X_1 \vert_{E \minus D} = X_2 \vert_{E \minus D} = \dotsb = X_d \vert_{E \minus D}.
\]
Let $X^0 \in \{+,-,0\}^E$ be the sign vector with
\[
X^0(e) =
\begin{dcases*}
0 & if $e \in D$ \\
X_1(e) & otherwise
\end{dcases*}
\]
 and let $\flip X_1$, \dots, $\flip X_d$ be the sign vectors with
\[
 \flip X_i(e) =
 \begin{dcases*}
 -X_i(e) & if $e \in D$ \\
 X_i(e) & otherwise.
 \end{dcases*}
 \]
Then there are oriented matroids $M^0 = (E,\mc L^0)$ and $\flip M = (E,\flip{\mc L})$ such that
 \begin{align*}
 \mc C^\ast(M^0) &= \mc C^\ast(M) \minus \{\pm X_1, \dotsc, \pm X_d\} \cup \{\pm X^0\} \\
 \mc C^\ast(\flip M) &= \mc C^\ast(M) \minus \{\pm X_1, \dotsc, \pm X_d\} \cup \{\pm \flip X_1, \dotsc, \pm \flip X_d\}.
 \end{align*}
We call $M^0$ a \emph{flip} of $M$, and say that $M^0$ is a flip between $M$ and $\flip M$. We say that that the cocircuits $X_1$, \dots, $X_d$ are \emph{involved} in this flip.
\end{prop}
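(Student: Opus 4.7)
The plan is to use the Folkman--Lawrence topological representation theorem. Fix a pseudosphere arrangement $\mc A = \{S_e\}_{e \in E}$ in $S^{d-1}$ representing $M$; since $M$ is uniform, $\mc A$ has no special pseudospheres, so for each $i$ the intersection $\bigcap_{j \neq i} S_{e_j}$ is an antipodal pair of points. Let $p_i$ be the point of this pair that realizes $X_i$. The hypothesis $X_1 \vert_{E \minus D} = \dotsb = X_d \vert_{E \minus D}$ says that the $p_i$ all lie on the same side of every $S_e$ with $e \notin D$, while the hypothesis $X_i(e_j) = 0$ for $i \neq j$ says that $p_i$ lies on exactly the pseudospheres indexed by $D \minus \{e_i\}$. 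A standard argument then shows that the $p_i$ are the vertices of a topological $(d-1)$-simplex $\Delta \subset S^{d-1}$ whose facet opposite $p_i$ is contained in $S_{e_i}$, and whose relative interior is disjoint from every pseudosphere of $\mc A$.

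To produce $\flip M$, I would modify $\mc A$ only inside a small open ball $U \supset \Delta$ chosen so that $U$ is disjoint from every $S_e$ with $e \notin D$. Inside $U$ the subarrangement $\{S_{e_i} \cap U\}$ is combinatorially equivalent to a central arrangement of $d$ linear hyperplanes in $\bb R^d$, so after transporting via a homeomorphism $U \to \bb R^d$ that carries the $S_{e_i} \cap U$ to these hyperplanes, I apply the antipodal map $x \mapsto -x$ to the linear arrangement and pull back. The result $\flip{\mc A}$ agrees with $\mc A$ outside $U$ and is a pseudosphere arrangement; it represents $\flip M$. The antipodal map sends each $p_i$ to a point $\flip p_i \in U$ with the same zero-support $D \minus \{e_i\}$ but with all entries on $D$ negated, giving precisely the cocircuit $\flip X_i$. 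No other cocircuit of $M$ is affected, yielding the claimed $\mc C^\ast(\flip M)$.

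To produce $M^0$, I would do a similar surgery inside $U$, but instead of applying the antipodal map to the linear model I apply a continuous deformation that slides the $d$ hyperplanes to meet at a common point $p^0 \in \bb R^d$ (collapsing $\Delta$ to $\{p^0\}$). Transporting back, this gives a pseudosphere arrangement $\mc A^0$ in which $\bigcap_{i=1}^d S_{e_i}$ becomes the special pseudosphere $\{\pm p^0\}$; the resulting oriented matroid $M^0$ is no longer uniform. At $p^0$ the sign vector is zero on all of $D$ and agrees with the common value of each $X_i$ on $E \minus D$, which is exactly $X^0$; this new cocircuit pair $\pm X^0$ replaces the pairs $\pm X_i$, and no other cocircuits change.

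The main obstacle is rigorously verifying that the local modifications produce bona fide pseudosphere arrangements --- that the two axioms hold for every intersection $S_A$ of the modified $\mc A$. This splits into two cases: for $A \not\subseteq D$, the intersection is unchanged because $U$ is disjoint from every pseudosphere indexed outside $D$; for $A \subseteq D$ the relevant intersections can be read off directly from the explicit linear model inside $U$, where both the antipodal flip and the coincidence deformation are obviously valid. Once these axiomatic checks are in place, the identification of the new cocircuit sets is the routine sign-vector computation sketched above.
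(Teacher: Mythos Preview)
The paper does not actually prove this proposition; it states only that the result is ``easy to see from topological representation; we leave their proofs to the reader.'' Your approach via Folkman--Lawrence and local surgery on the simplicial cell $\Delta$ is exactly the intended one.

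Two technical points need repair. First, the local model is off by a dimension: $U \subset S^{d-1}$ is a $(d-1)$-ball, so $U \cong \bb R^{d-1}$, and inside it the $S_{e_i}$ cut out an \emph{affine} arrangement of $d$ hyperplanes in $\bb R^{d-1}$ bounding a $(d-1)$-simplex, not a central linear arrangement in $\bb R^d$. In particular $x \mapsto -x$ fixes every linear hyperplane through the origin setwise and would change nothing; the operation you want is a reflection through the barycenter of the affine simplex (equivalently, pushing each facet hyperplane across the opposite vertex).

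Second, the surgery must be performed simultaneously at $\Delta$ and at the antipodal simplex $-\Delta$. For $M^0$ this is forced by axiom~1: collapsing only $\Delta$ leaves $\bigcap_i S_{e_i}$ equal to a single point rather than an $S^0$. For $\flip M$ it is forced by axiom~2: after flipping only near $\Delta$, the two points of $S_{D\setminus\{e_i\}}$ are $\flip p_i$ and the unmodified $-p_i$, and since $\flip X_i(e_i) = -X_i(e_i) = (-X_i)(e_i)$ both lie on the same side of $S_{e_i}$, so one of the two required sides $S_{D\setminus\{e_i\}}\cap S_{e_i}^{\pm}$ is empty. Doing the matching surgery at $-\Delta$ cures both problems, and then your cocircuit bookkeeping goes through as written.
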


\begin{prop} \label{fliprelation}
In the situation of Proposition~\ref{flipdef}, the oriented matroid $\flip M$ is uniform, and there are weak maps $M \weak M^0$ and $\flip M \weak M^0$. Moreover, $M$, $\flip M$, and $M^0$ are the only oriented matroids $N$ such that $N \weak M^0$.
\end{prop}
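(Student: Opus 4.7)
The plan is to prove all three assertions in parallel by working inside a topological representation $\mc A = \{S_e\}_{e \in E}$ of $M$ in $S^{d-1}$, which has no special pseudospheres because $M$ is uniform. First I would extract the geometry behind the hypotheses of Proposition~\ref{flipdef}: each cocircuit $X_i$ corresponds to an antipodal pair of points, and the conditions $X_i(e_j) = 0$ for $j \neq i$ together with the prescribed signs on $E \minus D$ single out a unique point $p_i \in \bigcap_{j \neq i} S_{e_j}$. The common restriction $X_i|_{E \minus D}$ forces all the $p_i$ to lie in a single open cell $R$ of the subarrangement $\{S_e\}_{e \notin D}$, and inside $R$ the pseudospheres $S_{e_1}, \ldots, S_{e_d}$ bound a simplex-like region $\Sigma$ with vertices $p_1, \ldots, p_d$ whose interior meets no other pseudosphere (by uniformity of $M$).

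Next I would realize $M^0$ and $\flip M$ by a continuous deformation of $\mc A$ supported in a neighborhood of $\Sigma$. Contracting $\Sigma$ until its $d$ vertices collapse to the barycenter gives an arrangement $\mc A^0$ in which $S_{e_1}, \ldots, S_{e_d}$ are concurrent at one point, and reading off sign vectors recovers the cocircuit set prescribed for $M^0$. Continuing past the degenerate moment produces an arrangement $\flip{\mc A}$ in which $\Sigma$ is reflected through its barycenter: the vertices on the opposite side have their signs on $D$ negated and are otherwise unchanged, so the corresponding cocircuits are exactly the $\flip X_i$. Since the deformation is local to $\Sigma$ and no other pseudosphere meets its interior, no special pseudospheres are introduced into $\flip{\mc A}$, so $\flip M$ is uniform. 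The weak maps $M \weak M^0$ and $\flip M \weak M^0$ then follow because every cell of $\mc A^0$ is a union of cells of $\mc A$ (respectively $\flip{\mc A}$) merged by the deformation, and the covector of any merged cell is coordinatewise dominated by each of its constituent covectors.

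The hard part will be the uniqueness claim that $\{M, \flip M, M^0\}$ exhausts $\{N : N \weak M^0\}$, and the plan there is again to argue locally. First I would show that any $N \weak M^0$ admits a topological representation $\mc A_N$ which agrees with $\mc A^0$ outside a neighborhood of the concurrence point $p$; this uses the fact that every cocircuit of $M^0$ other than $\pm X^0$ is already supported off $D$ and so, after being dominated by a covector of $N$, pins down the structure of $\mc A_N$ away from $p$. Near $p$ the $d$ pseudospheres $S_{e_1}, \ldots, S_{e_d}$ in $\mc A_N$ must either remain concurrent (yielding $N = M^0$) or separate into a simplex whose $d$ vertices collectively dominate $X^0$. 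In the latter case the requirement that the vertices' signs on $E \minus D$ match $X_1|_{E \minus D}$, combined with the oriented matroid axioms, leaves exactly two antipodally related sign patterns on $D$, corresponding to $M$ and $\flip M$.
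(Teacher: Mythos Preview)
The paper gives no proof of this proposition: just before Proposition~\ref{flipdef} it says these results ``are easy to see from topological representation; we leave their proofs to the reader.'' Your plan is exactly the argument the paper is pointing to, and the local deformation of the simplicial cell $\Sigma$ is the standard picture for a mutation; uniformity of $\flip M$ and the two weak maps drop out of it immediately.

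A small correction and a caveat on the uniqueness paragraph. The phrase ``every cocircuit of $M^0$ other than $\pm X^0$ is already supported off $D$'' is inaccurate---those cocircuits generally have nonzero entries on $D$. What you want instead is that each of them has only $d-1$ zeros (being cocircuits of the uniform $M$); combined with the inclusion of tope sets this is what forces the structure of any rank-$d$ weak preimage $N$ to agree with $M^0$ away from $p$. Also, as literally stated the uniqueness clause is false without a rank hypothesis on $N$: the free matroid on $E$ weak-maps to every oriented matroid on $E$, in particular to $M^0$, yet equals none of $M$, $\flip M$, $M^0$ when $\lvert E\rvert>d$. The paper only ever applies the proposition inside a poset of fixed-rank liftings, so this is harmless in context, but your write-up should add the hypothesis $\rank(N)=d$ explicitly before carrying out the local analysis near $p$.
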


\begin{prop} \label{flipblock}
In the situation of Proposition~\ref{flipdef}, if $X \in \mc L$ is a cocircuit such that for all $1 \le i \le d$, either $X(e_i) = X_i(e_i)$ or $X(e_i) = 0$, then $X \in \{X_1,\dotsc,X_d\}$.
\end{prop}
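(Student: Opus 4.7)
The plan is to use the topological representation of $M$. Let $\mc A = \{S_e : e \in E\}$ be a topological representation of $M$ in $S^{d-1}$. Since $M$ is uniform of rank $d$ and $|D| = d$, the sub-arrangement $\{S_{e_1}, \dotsc, S_{e_d}\}$ has empty common intersection and cuts $S^{d-1}$ into $2^d$ closed simplicial orthants. Let $C$ be the orthant lying on the $X_i(e_i)$-side of each $S_{e_i}$; then $C$ is a closed $(d-1)$-simplex whose corners are the antipodal intersections $\bigcap_{j \neq i_0} S_{e_j}$ on the appropriate side, and the sign agreement on $E \minus D$ identifies these corners with $X_1, \dotsc, X_d$. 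The hypothesis on $X$ translates to the geometric statement that $X$, viewed as a point of $S^{d-1}$, lies in $C$, so the proposition reduces to showing that the only vertices of $\mc A$ in $C$ are its corners.

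Every cocircuit of the uniform rank-$d$ matroid $M$ has exactly $d - 1$ zero coordinates. If a cocircuit $X \in C$ has all its zeros in $D$, then $X$ lies on one of the antipodal pairs $\bigcap_{j \neq i_0} S_{e_j}$, and the sign condition combined with the agreement on $E \minus D$ forces $X = X_{i_0}$. The remaining case---where $X$ has a zero at some $f \in E \minus D$, equivalently $X \in C \cap S_f$---is ruled out by the key claim that $C \cap S_f = \emptyset$ for every $f \in E \minus D$.

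The key claim is proved by induction on $d$. For the base case $d = 2$, $C$ is an arc on $S^1$ between $X_1$ and $X_2$; axiom 2 of pseudosphere arrangements realizes $S_f$ as two points dividing $S^1$ into two pseudo-hemispheres, and the sign agreement $X_1(f) = X_2(f)$ places both corners of $C$ on the same hemisphere, which forces the two points of $S_f$ to lie outside $C$. For the inductive step ($d \geq 3$), apply the claim to each contraction $M / \{e_i\}$, which is uniform of rank $d - 1$: its topological representation lives on $S_{e_i}$ with simplex equal to the facet $F_i = C \cap S_{e_i}$, and the cocircuits $X_j \vert_{E \minus \{e_i\}}$ for $j \neq i$ satisfy the hypotheses of Proposition~\ref{flipdef} for $M / \{e_i\}$. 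The induction yields $F_i \cap S_f = \emptyset$, so $S_f$ avoids $\partial C = \bigcup_i F_i$. Moreover $S_f$ cannot be entirely contained in $C$: by uniformity $S_f \cap S_{e_i}$ is non-empty, but it would then sit inside $F_i \cap S_f = \emptyset$. Since $S_f$ is a connected pseudosphere avoiding $\partial C$ while having nonempty intersection with $S^{d-1} \minus C$, connectedness forces $S_f \cap C = \emptyset$.

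The main obstacle is the final connectedness argument: without intervention, $S_f$ could in principle enter the interior of $C$ without touching $\partial C$. This is prevented by combining two facts established above---namely $S_f \cap \partial C = \emptyset$ (from the inductive hypothesis applied to each facet $F_i$) and $S_f \not\subseteq C$ (from uniformity, via $S_f \cap S_{e_i} \neq \emptyset$). Together these partition $S_f$ into two relatively open subsets consisting of its points inside $C$ and outside $C$ respectively, and connectedness of the pseudosphere $S_f$ then forces the inside subset to be empty.
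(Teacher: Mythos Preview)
Your argument is correct and follows the same geometric idea as the paper's proof: pass to the topological representation, identify the closed simplex $C$ cut out by the $S_{e_i}$ on the $X_i(e_i)$-sides, and show that the only vertices of $\mc A$ in $C$ are its corners $X_1,\dotsc,X_d$. The paper dispatches this in two lines by asserting that the hypotheses on the $X_i$ force $C$ to be a simplicial \emph{region of the full arrangement} $\mc A$, from which the conclusion is immediate; you instead prove the equivalent statement $C\cap S_f=\emptyset$ for every $f\in E\setminus D$ by an explicit induction on $d$ via contractions $M/e_i$, using connectedness of $S_f$ for $d\ge 3$ and a direct arc argument for $d=2$. Your version supplies the details the paper leaves to the reader, at the cost of some extra length; both arguments rest on the same picture.
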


\begin{proof}
Let $\mc A = \{S_e\}_{e \in E}$ be a topological representation of $M$. The conditions on $X_1$, \dots, $X_d$ imply that $S_{e_1}^{X_1(e_1)}$, \dots, $S_{e_d}^{X_d(e_d)}$ bound a simplicial region of $\mc A$. The only cocircuits which correspond to points in the closure of this region are $X_1$, \dots, $X_d$.
\end{proof}

Now suppose that $M = (E,\mc L)$ is a uniform lifting of a uniform oriented matroid $M_0 = (E_0, \mc L_0)$, where $E = E_0 \cup \{f\}$. Let $D$, $X_1$, \dots, $X_d$, and $\flip M$ be as in Proposition~\ref{flipdef}, and suppose that $\flip M$ is also a lifting of $M_0$. This implies that $f \notin D$ and $X_1(f) = X_2(f) = \dotsb = X_d(f) \neq 0$. Since replacing all of the $X_1$, \dots, $X_d$ with their negatives does not change $\flip M$, we may assume $X_1(f) = X_2(f) = \dotsb = X_d(f) = +$. Along with the original assumptions on the $X_i$, this completely determines the $X_i$. In this case, $\flip M$ is determined by $D$, and we say $D$ is the \emph{support} of the flip between $M$ and $\flip M$.

Given a uniform oriented matroid $M = (E, \mc L)$, let $G(M)$ denote the graph whose vertices are all uniform liftings of $M$ and whose edges are the flips between them. The following is a version of \cite[Lem.\ 3.1]{Rei99}, \cite[Cor.\ 4.3]{San00} applied to $\mc F(M)$.

\begin{prop} \label{graphtospace}
If $G(M)$ is disconnected, then there is some subset $A \subseteq E$ such that $\mc F(M \vert_A) \minus \hat 0$ is disconnected and $\corank(M \vert_A) > 1$. 
\end{prop}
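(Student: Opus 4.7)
The plan is to prove the slightly stronger statement that if $G(M)$ is disconnected, then one can find $A \subseteq E$ such that the order complex of $\mc F(M\vert_A) \setminus \hat 0$ is disconnected; the corank condition will be automatic, since corank at most $1$ forces $\mc F(M\vert_A) \setminus \hat 0$ to have at most one element. I would proceed by induction on $|E|$.

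The easy direction sets up the framework: each flip witnesses a common cover in the poset, because by Propositions~\ref{flipdef} and \ref{fliprelation} the flip between $M_1$ and $\flip M_1$ produces a non-trivial lifting $M^0$ with $M_1, \flip M_1 \weak M^0$, so $M_1$ and $\flip M_1$ share the edge $\{M_1, M^0, \flip M_1\}$ in the order complex. Therefore every connected component of the order complex of $\mc F(M)\setminus\hat 0$ is a union of components of $G(M)$. In particular, if the order complex of $\mc F(M)\setminus \hat 0$ is already disconnected, we may take $A = E$ and are done.

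For the inductive step, assume $G(M)$ has at least two components $\mc C_1, \mc C_2$ but the order complex of $\mc F(M)\setminus\hat 0$ is connected. Then there must exist a non-uniform lifting $N \in \mc F(M)\setminus\hat 0$ that is a common lower bound of some uniform $M_1 \in \mc C_1$ and uniform $M_2 \in \mc C_2$ (a ``bridging'' lifting); pick $N$ to be maximal among such bridging liftings in the weak-map order. By Proposition~\ref{fliprelation}, the only oriented matroids weakly above the intermediate $M^0$ of any flip are the two uniform endpoints of that flip (which lie in the same $G(M)$-component), so $N$ itself cannot be the intermediate of a single flip. Hence $N$ has strictly more degeneracy than a flip: in the topological representation picture of Section~\ref{sec:elw}, the pseudohyperplane arrangement realizing $N$ carries at least two independent simplex-degeneracies compared to a uniform lifting. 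Equivalently, there is some $(d+1)$-subset $S \subseteq E$ (where $d = \rank M$) that is dependent in $N$ but independent in $M$, and this dependency is not the unique one.

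I would then pick an element $e \in E$ that participates in a dependency of $N$ but whose deletion does not merge the relevant local regions bounding $M_1$ and $M_2$, and pass to $A' = E \setminus \{e\}$. The key claim is that $M_1\vert_{A'}$ and $M_2\vert_{A'}$ still lie in distinct components of $G(M\vert_{A'})$: any flip-path in $G(M\vert_{A'})$ connecting them could be reintroduced with $e$ to yield either a flip-path in $G(M)$ (impossible) or a bridging lifting strictly above $N$ (contradicting maximality of $N$). Then applying the inductive hypothesis to $M\vert_{A'}$ produces the desired $A \subseteq A'$.

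The main obstacle I expect is precisely this ``lifting-back'' step: verifying that a flip in $G(M\vert_{A'})$ either corresponds to a genuine flip in $G(M)$ or produces a strictly larger bridging lifting. The cleanest way to control this is through the pseudohyperplane picture of liftings, where one checks that re-inserting the pseudosphere $S_e$ into the local simplex involved in an $A'$-flip either leaves the simplex intact (giving a flip in $G(M)$) or subdivides it in a way that creates an element of $\mc F(M)\setminus\hat 0$ strictly weakly above $N$ and still bridging $\mc C_1$ and $\mc C_2$. Choosing $e$ correctly so that this dichotomy actually holds — and in particular, handling the possibility that multiple dependencies interact — is the delicate point and will require a careful case analysis on the local structure of $N$ near the cocircuits involved in the $A'$-flip.
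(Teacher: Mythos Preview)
Your inductive strategy is genuinely different from the paper's argument, but the inductive step has a real gap that I do not see how to close.

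The problem is the ``lifting-back'' step, which you yourself flag. Given a flip-path in $G(M\vert_{A'})$ from $M_1\vert_{A'\cup\{f\}}$ to $M_2\vert_{A'\cup\{f\}}$, you assert that reinserting $e$ yields \emph{either} a flip-path in $G(M)$ \emph{or} a bridging element strictly above $N$. But ``reinserting $e$'' is not a well-defined operation: a uniform lifting of $M\vert_{A'}$ admits many extensions to a lifting of $M$, and there is no canonical one. More importantly, nothing ties these extensions to $N$. The maximality of $N$ constrains only elements of $\mc F(M)$ that lie \emph{above} $N$ and bridge $\mc C_1$ and $\mc C_2$; it says nothing about arbitrary extensions of vertices along an $A'$-flip-path. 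So the claimed dichotomy is unsupported, and the choice of $e$ is circular (``whose deletion does not merge the relevant local regions'' presupposes exactly what you want to prove).

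By contrast, the paper never deletes elements one at a time. It works entirely inside the poset $\mc F(M)$: one shows that the uniform liftings together with their flips form an upper ideal, and then applies a general poset lemma (if an upper ideal containing all maximal elements is disconnected, some open upper interval $I_{>M'}$ must be disconnected). The key structural fact is then that for any non-maximal $M'\in\mc F(M)$ one has a product decomposition
\[
I_{\ge M'} \;\cong\; \mc F(M\vert_{A_1}) \times \dotsb \times \mc F(M\vert_{A_k}),
\]
where the $A_i$ index the special (zero-dimensional) pseudospheres of a topological representation of $M'$. Disconnectedness of $I_{>M'}$ then forces $k=1$ and $\mc F(M\vert_{A_1})\setminus\hat 0$ disconnected, and $\corank(M\vert_{A_1})=1$ would make $M'$ a flip, contradicting $M'\notin G$. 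So the subset $A$ is read off directly from the combinatorics of a single bridging element, with no induction and no need to control how flips behave under deletion.
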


\begin{proof}
For any poset $P$, an \emph{upper ideal} of $P$ is a subposet $I \subseteq P$ such that $x \in I$ and $y > x$ implies $y \in I$. For any $x \in P$, define the upper ideals $I_{\ge x} = \{ y \in P : y \ge x\}$ and $I_{> x} = \{ y \in P : y > x\}$. The following is an easy exercise.

\begin{prop} \label{graphtoposet}
Let $P$ be a finite connected poset, and let $G$ be an upper ideal of $P$ containing all the maximal elements of $P$. Suppose that $I_{> x}$ is connected for any $x \in P \minus G$. Then $G$ is connected.
\end{prop}

Let $G$ be the subposet of $\mc F(M)$ consisting of all uniform liftings of $M$ and the flips between them. By Proposition~\ref{fliprelation}, $G$ is an upper ideal of $\mc F(M)$. If $G$ is disconnected, by Proposition~\ref{graphtoposet} there is some $M' \in \mc F(M) \minus G$ such that $I_{> M'}$ is disconnected. We now use the following.

\begin{prop}
For any non-maximal $M' \in \mc F(M)$, there exist $A_1$, \dots, $A_k \subseteq E$ such that $I_{\ge M'} \cong \mc F(M \vert_{A_1}) \times \dotsb \mc \times \mc F(M \vert_{A_k})$.
\end{prop}

\begin{proof}
Let $\mc A$ be a topological representation of $M'$. Let $S_{A_1}$, \dots, $S_{A_k}$ be all of the special pseudospheres of $\mc A$, where $A_i$ is the maximal set $A \subseteq E \cup \{f\}$ such that $S_A = S_{A_i}$. Since $M$ is uniform, none of these special pseudospheres intersects $S_f$, and hence they are all 0-dimensional and $A_i \subseteq E$ for all $i$. Thus, moving the arrangement $\mc A$ into a more general position (while still representing a lift of $M$) is equivalent to moving each of the subarrangements $\{S_e\}_{e \in A_i \cup \{f\}}$ into more general position; in other words, the map $I_{\ge M'} \to \mc F(M \vert_{A_1}) \times \dotsb \times \mc F(M \vert_{A_k})$ given by $M'' \mapsto (M'' \vert_{A_1 \cup \{f\}}, \dotsc, M'' \vert_{A_k \cup \{f\}})$ is an isomorphism of posets.
\end{proof}

Now, let $M' \in \mc F(M) \minus G$ and let $I_{\ge M'} \cong \mc F(M \vert_{A_1}) \times \dotsb \mc \times \mc F(M \vert_{A_k})$ as in the previous Proposition. We may assume each $\mc F(M \vert_{A_i})$ is non-trivial. Then $I_{> M'}$ is disconnected only if $k = 1$ and $\mc F(M \vert_{A_1}) \minus \hat 0$ is disconnected. In this case, if $\corank(M \vert_{A_1}) = 1$, then $M'$ is a flip, which contradicts $M' \notin G$. This completes the proof.
\end{proof}

Thus, to disprove the extension space conjecture, it suffices to show the following.

\begin{thm} \label{thm:disc}
There is a realizable uniform oriented matroid $M$ for which $G(M)$ is disconnected.
\end{thm}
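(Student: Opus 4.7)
The plan is to take $M = M(\tilde E_N)$ for sufficiently large $N$ and an appropriate random perturbation of $E_N$. With probability $1$ this is a uniform realizable oriented matroid of rank $3$ (and corank $6N-3$), so it is a valid candidate. I would show that $G(M)$ is disconnected with positive probability. Dualizing via $\mc E(N) \cong \mc F(N^\ast)$ together with the Bohne-Dress theorem, $G(M)$ is graph-isomorphic to the flip graph of fine (i.e.\ maximally refined) zonotopal tilings of the three-dimensional zonotope $Z(\tilde E_N)$, with edges corresponding to single-cube flips. It thus suffices to show that this flip graph of tilings is disconnected.

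The strategy is to transfer the disconnectedness from the unperturbed zonotope $Z(E_N)$, which was established in \cite{Liu16}. From that paper I would extract two fine zonotopal tilings $T_1, T_2$ of $Z(E_N)$ lying in distinct flip components, together with a combinatorial invariant $\Phi$ of tilings preserved by every flip of $Z(E_N)$ which separates them. I would then refine $T_1, T_2$ to fine tilings $\tilde T_1, \tilde T_2$ of $Z(\tilde E_N)$ by choosing, within each of the six bundles of formerly parallel vectors $\{e_i - e_j\}$, a compatible local tiling of the thin sub-zonotope replacing that bundle. Finally, I would design a flip-invariant $\tilde \Phi$ on fine tilings of $Z(\tilde E_N)$ extending $\Phi$ and verify that $\tilde \Phi(\tilde T_1) \neq \tilde \Phi(\tilde T_2)$, which forces $\tilde T_1$ and $\tilde T_2$ into different components of the flip graph of $Z(\tilde E_N)$ and so proves $G(M)$ disconnected.

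The main obstacle is controlling the new flips that become available after the perturbation. A flip in $Z(\tilde E_N)$ either descends to a flip in $Z(E_N)$, in which case it preserves $\Phi$ and so cannot bridge the two components, or else it involves at least two vectors drawn from the same bundle of $\tilde E_N$; call these \emph{bundle-crossing} flips. Only the bundle-crossing flips are new, and a priori a sequence of them could connect $\tilde T_1$ to $\tilde T_2$. The technical heart of the proof is to design $\tilde \Phi$, together with the local refinements inside each bundle, so that bundle-crossing flips also preserve it. This is where a probabilistic argument seems necessary: for a random perturbation with $N$ large, one expects the local geometry inside each bundle to be sufficiently generic that the invariant extends and no pathological flip sequence exists, but a genuine concentration estimate is needed to guarantee positive probability, and this is presumably what forces $N$ to be of order $10^5$. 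Once $G(M)$ is shown to be disconnected, Theorem~\ref{thm:disc} follows, and combining with Proposition~\ref{graphtospace} yields the desired counterexample to the extension space conjecture.
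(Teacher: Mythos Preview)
Your high-level framing is right, but the step you label ``the technical heart'' is a genuine gap, and your proposed dichotomy of flips points in the wrong direction. The paper does \emph{not} extend an invariant $\Phi$ from the unperturbed zonotope and then show the perturbation is harmless. Rather, the perturbation is used constructively: each displacement is drawn from a specific six-element set $\Delta\subset\bb{TP}^3$, and the resulting signs $u_e^l\in\{\pm1\}$ are built directly into the invariant. Concretely, the paper defines a random subset $\Omega$ of lattice points (depending jointly on the $g_e$ that determine the initial lifting and on the $u_e$), proves by a union bound over $\abs{Q^\star}\le(2N+1)^3$ points that $\Omega$ satisfies a density property with positive probability, and then specifies a set $G^\dagger$ of liftings by three interlocking families (a), (b), (c) of cocircuit conditions indexed by $\Omega$. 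Flip-closure of $G^\dagger$ is established by a cyclic blocking argument: any flip that would destroy a cocircuit witnessing (a) is obstructed, via Proposition~\ref{flipblock}, by a cocircuit from (b); a flip threatening (b) is obstructed by one from (c); and a flip threatening (c) is obstructed by one from (a).

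Crucially, every flip analyzed in this argument has support consisting of one element from each of four \emph{distinct} bundles---none are bundle-crossing in your sense. Your claim that such flips ``descend to a flip in $Z(E_N)$'' is where the plan breaks: in the unperturbed arrangement, three hyperplanes $H_{(i,j,r)},H_{(j,k,s)},H_{(k,i,t)}$ with $r+s+t=0$ share a common line, so these four-bundle configurations are degenerate and give no flip at all; only after perturbation do they become honest simplicial cells, and it is precisely these newly created flips that must be controlled. The role of the random $u_e$ (and of the set $\Omega$) is to guarantee that whenever such a flip threatens one of (a), (b), (c), the next condition in the cycle supplies a blocking cocircuit. So the perturbation is not a nuisance to be absorbed by an extended $\tilde\Phi$; it is the source of the invariant.
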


\section{Main proof}

The main idea will be to define a large realizable uniform oriented matroid of rank 3 and show that one of its liftings is highly ``entangled.'' This entanglement will be achieved by what can be thought of as ``local reorientations'' which are applied on a random subset of its ground set $E$. Formally, we will build this lifting up from many smaller liftings of the braid arrangement of dimension 3. Many of the ideas here originally appeared in the author's paper \cite[Sec.\ 4]{Liu16}, and some of the exposition is rewritten from there.

\subsection{Liftings of the 3-dimensional braid arrangement}

We will work in the \emph{tropical projective space} $\bb {TP}^3 := \bb R^4 / (1,1,1,1)\bb R$, whose points we write as points of $\bb R^4$ modulo the relation $x \sim x + (c,c,c,c)$ for any $c \in \bb R$. We define an inner product on $\bb{TP}^3$ by $\langle x,y \rangle = \langle x',y' \rangle_{\bb R^4}$, where $x' \in \bb R^4$ satsifies $x'_1 + \dotsb + x'_4 = 0$ and the residue of $x'$ in $\bb{TP}^3$ is $x$, and $y'$ is defined similarly. While $\bb {TP}^3$ is isomorphic to $\bb R^3$, it has a more convenient coordinate system for our purposes.

Let $\Gamma_n^k$ denote the set of all ordered $k$-tuples $(i_1,\dotsc,i_k)$ of distinct $i_1$, \dots, $i_k \in [n]$ under the equivalence relation $(i_1,\dotsc,i_k) \sim (i_2,\dotsc,i_k,i_1)$. We will use $(i_1 \dotsb i_k)$ to denote the equivalence class of $(i_1,\dotsc,i_k)$ in $\Gamma_n^k$. We write $-(i_1 \dotsb i_k)$ to denote $(i_k \dotsb i_1)$.

Let $E_0 := \Gamma_4^2$. Let $e_i$ be the $i$-th standard basis vector of $\bb R^4$ mapped to $\bb {TP}^3$, and let $e_{ij} := e_i - e_j$. Let $M_0 = (E_0, \mc L_0)$ be the oriented matroid of the vector configuration $\{ e_{ij} : 1 \le i < j \le 4 \}$, where $e_{ij}$ is indexed by $(ij) \in  E_0$. This oriented matroid is topologically represented by the intersection of the unit 2-sphere in $\bb {TP}^3$ with the \emph{braid arrangement} $\mc B_0 := \{ H_{ij} : 1 \le i < j \le 4 \}$, where $H_{ij}$ is the oriented hyperplane $\{ x \in \bb {TP}^3 : x_i - x_j = 0 \}$ with positive direction $e_{ij}$.

We construct eight specific liftings of $M_0$. For each $\gamma  = (ijk) \in \Gamma_4^3$, let $\mc B_\gamma$ be the hyperplane arrangement in $\bb {TP}^3$ with hyperplanes
\begin{alignat*}{2}
H_{(ij)}^\gamma &= \{ x \in \bb{TP}^3 : x_i - x_j = 1 \} \qquad& H_{(il)}^\gamma &= \{ x \in \bb{TP}^3 : x_i - x_l = 0 \} \\
H_{(jk)}^\gamma &= \{ x \in \bb{TP}^3 : x_j - x_k = 1 \} \qquad& H_{(jl)}^\gamma &= \{ x \in \bb{TP}^3 : x_j - x_l = 0 \} \\
H_{(ki)}^\gamma &= \{ x \in \bb{TP}^3 : x_k - x_i = 1 \} \qquad& H_{(kl)}^\gamma &= \{ x \in \bb{TP}^3 : x_k - x_l = 0 \}
\end{alignat*}
where $\{l\} = [4] \minus \{i,j,k\}$. To orient these hyperplanes, for any distinct $1 \le p,q \le 4$, define
\[
\alpha_{pq} =
\begin{dcases*}
+ & if $p < q$ \\
- & if $p > q$
\end{dcases*}
\]
and orient each $H_{(pq)}^\gamma$ so that the $\alpha_{pq}$ side of $H_{(pq)}^\gamma$ is in the $e_{pq}$ direction. With this orientation, the intersection of $\mc B_\gamma$ with the sphere at infinity is a topological representation of $M_0$. Hence, as discussed in Section~\ref{sec:elw}, there is a unique lifting $M_\gamma = (E_0 \cup \{f\}, \mc L_\gamma)$ of $M_0$ such that $\mc L_\gamma^+ := \{ X \in \mc L_\gamma : X(f) = + \}$ is topologically represented by $\mc B_\gamma$. It is easily checked that $M_\gamma$ is a maximal element of $\mc F(M_0)$.

\begin{prop} \label{Bgamma}
Let $\gamma = (ijk)$ be as above.
\begin{enumerate} \renewcommand{\labelenumi}{(\alph{enumi})}
\item For each $p \in \{i,j,k\}$, there are cocircuits $X_1$, $X_2$, and $X_3 \in \mc L_\gamma^+$ satisfying
\begin{alignat*}{4}
X_1((jk)) &= \alpha_{kj} \qquad& X_1((ki)) &= 0 	       \qquad& X_1((ij)) &= 0 		     \qquad& X_1((pl)) &= 0 \\
X_2((jk)) &= 0 		     \qquad& X_2((ki)) &= \alpha_{ik} \qquad& X_2((ij)) &= 0 		     \qquad& X_2((pl)) &= 0 \\
X_3((jk)) &= 0 		     \qquad& X_3((ki)) &= 0 	        \qquad& X_3((ij)) &= \alpha_{ji} \qquad& X_3((pl)) &= 0
\end{alignat*}
\item There is a cocircuit $X \in \mc L_\gamma^+$ satisfying
\begin{alignat*}{2}
X((jk)) &= \alpha_{kj} \qquad& X((il)) &= 0 \\
X((ki)) &= \alpha_{ik} \qquad& X((jl)) &= 0 \\
X((ij)) &= \alpha_{ji}   \qquad& X((kl)) &= 0
\end{alignat*}
\end{enumerate}
\end{prop}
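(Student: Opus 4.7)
The plan is to read off the claimed cocircuits directly from the hyperplane arrangement $\mc B_\gamma \subset \bb{TP}^3$, using the correspondence described in Section~\ref{sec:elw}: cocircuits of $M_\gamma$ lying in $\mc L_\gamma^+$ are exactly the $0$-dimensional intersections of hyperplanes in $\mc B_\gamma$, with their sign on each element $(pq) \in E_0$ determined by which side of the oriented hyperplane $H_{(pq)}^\gamma$ the intersection point lies on. For each cocircuit claimed in the proposition, I would identify the unique point in $\bb{TP}^3$ where its three specified zero hyperplanes meet, check that no further hyperplane passes through this point (so that the intersection really is $0$-dimensional and gives a cocircuit), and then compute the signs on the remaining coordinates.

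For part (a), fix $p \in \{i,j,k\}$. For $X_1$, solve $x_k - x_i = 1$, $x_i - x_j = 1$, $x_p - x_l = 0$. This pins down a unique point of $\bb{TP}^3$ at which $x_j - x_k = -2$; since $H_{(jk)}^\gamma$ is the hyperplane $x_j - x_k = 1$ oriented so that the $\alpha_{jk}$ side is the one lying in the $e_{jk}$ direction (i.e., the side with $x_j - x_k > 1$), our point lies on the opposite side, giving $X_1((jk)) = -\alpha_{jk} = \alpha_{kj}$. The treatments of $X_2$ and $X_3$ are entirely analogous: $X_2$ corresponds to the intersection $H_{(jk)}^\gamma \cap H_{(ij)}^\gamma \cap H_{(pl)}^\gamma$, where $x_k - x_i = -2$ yields $X_2((ki)) = \alpha_{ik}$, and $X_3$ corresponds to $H_{(jk)}^\gamma \cap H_{(ki)}^\gamma \cap H_{(pl)}^\gamma$, where $x_i - x_j = -2$ yields $X_3((ij)) = \alpha_{ji}$. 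A brief plug-in calculation in each of the three cases $p \in \{i,j,k\}$ confirms that the resulting point lies on exactly three hyperplanes of $\mc B_\gamma$, so the intersection is indeed $0$-dimensional and yields a cocircuit.

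For part (b), the equations $x_i - x_l = x_j - x_l = x_k - x_l = 0$ force $x_i = x_j = x_k = x_l$, i.e., the origin of $\bb{TP}^3$. At the origin, $x_p - x_q = 0 < 1$ for every pair $\{p,q\} \subseteq \{i,j,k\}$, so the point lies on the $-\alpha_{pq}$ side of each of $H_{(ij)}^\gamma$, $H_{(jk)}^\gamma$, $H_{(ki)}^\gamma$, yielding the three signs $\alpha_{ji}$, $\alpha_{kj}$, $\alpha_{ik}$ claimed for $X$. Since the origin lies on $H_{(il)}^\gamma$, $H_{(jl)}^\gamma$, $H_{(kl)}^\gamma$ but on none of the three shifted hyperplanes $H_{(ij)}^\gamma$, $H_{(jk)}^\gamma$, $H_{(ki)}^\gamma$, it is a $0$-dimensional cell of $\mc B_\gamma$ and hence gives a genuine cocircuit of $M_\gamma$.

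There is no real conceptual obstacle: the whole proposition amounts to solving small linear systems and translating the affine inequalities $x_p - x_q < c$ (with $c = 1$ for $\{p,q\} \subseteq \{i,j,k\}$ and $c = 0$ when $l \in \{p,q\}$) into signs via the orientation convention. The one thing requiring a little care is remembering that the $\alpha_{pq}$ side of $H_{(pq)}^\gamma$ is the side in the $e_{pq}$ direction, so that a point at which $x_p - x_q$ falls below the defining constant receives sign $-\alpha_{pq} = \alpha_{qp}$ on $(pq)$.
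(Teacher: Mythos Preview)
Your proposal is correct and takes essentially the same approach as the paper: both identify explicit points of $\bb{TP}^3$ (the paper names $x_i=0$, $x_j=-1$, $x_k=1$, $x_l=x_p$ for $X_1$ and the origin for $X$) and read off the signs from the hyperplane arrangement $\mc B_\gamma$. You are in fact slightly more careful than the paper in explicitly checking that each point lies on exactly three hyperplanes and hence gives a genuine cocircuit, which the paper leaves implicit.
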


\begin{proof}
In the arrangement $\mc B_\gamma$, the covector $X_1$ corresponds to the point $x \in \bb{TP}^3$ with $x_i = 0$, $x_j = -1$, $x_k = 1$, and $x_l = x_p$. $X_2$ and $X_3$ can be found similarly. The covector $X$ corresponds to the point $(0,0,0,0)$.
\end{proof}

\subsection{A group action on $\Gamma_4^3$} \label{sec:grpaction}

We will use many copies of the liftings in the previous section to construct a lifting of a larger oriented matroid. To help in doing so, we define a certain group action on $\Gamma_4^3$.

For each $\gamma  = (ijk) \in \Gamma_4^3$, we define a function $o_\gamma : \binom{[4]}{3} \to \Gamma_4^3$ by
\begin{align*}
o_\gamma(\{i,j,k\}) &= (ijk) \\
o_\gamma(\{i,j,l\}) &= (ijl) \\
o_\gamma(\{j,k,l\}) &= (jkl) \\
o_\gamma(\{k,i,l\}) &= (kil)
\end{align*}
where $\{l\} = [4] \minus \{i,j,k\}$. It is easy to check that $\gamma$ is determined by $o_\gamma$.

The relationship of $o_\gamma$ to $M_\gamma$ is as follows. Suppose that $\gamma \in \Gamma_4^3$ and $o_\gamma(\{i,j,k\}) = (ijk)$. Then the restriction of $\mc B_\gamma$ to the hyperplanes $H_{(ij)}^\gamma$, $H_{(jk)}^\gamma$, $H_{(ki)}^\gamma$ is isomorphic to the hyperplane arrangement
\begin{align}
H_{ij}' &= \{ x \in \bb{TP}^3 : x_i - x_j = 1 \} \nonumber \\
H_{jk}' &= \{ x \in \bb{TP}^3 : x_j - x_k = 1 \} \label{triple} \\
H_{ki}' &= \{ x \in \bb{TP}^3 : x_k - x_i = 1 \} \nonumber
\end{align}
where $H_{(ij)}^\gamma$ maps to $H_{ij}'$, etc., and each $H_{pq}'$ is oriented in the same way that $H_{(pq)}^\gamma$ is.

Now, we will map each $\alpha \in \Gamma_4^2$ to a permutation $\pi_\alpha : \Gamma_4^3 \to \Gamma_4^3$. This map is completely determined by the following rules: For any distinct $i$, $j$, $k$, $l \in [4]$, we have
\begin{align*}
\pi_{(ij)}(ijk) &= (jil) \\
\pi_{(kl)}(ijk) &= (ijl).
\end{align*}
Let $G_{\Gamma_4^3}$ be the permutation group of $\Gamma_4^3$ generated by all the $\pi_\alpha$.

\begin{prop} \label{groupaction}
The following are true.
\begin{enumerate} \renewcommand{\labelenumi}{(\alph{enumi})}
\item Every element of $G_{\Gamma_4^3}$ is an involution, and $G_{\Gamma_4^3}$ is abelian and transitive on $\Gamma_4^3$.
\item For $l \in [4]$, let $H_l$ be the subgroup of $G_{\Gamma_4^3}$ generated by $\pi_{(il)}$ for all $i \in [4] \minus \{l\}$. Let $i$,$j$,$k \in [4] \minus \{l\}$ be distinct, and let $\Gamma_4^3(ijk)$ be the set of all $\gamma \in \Gamma_4^3$ such that $o_\gamma(\{i,j,k\}) = (ijk)$. Then $\Gamma_4^3(ijk)$ is an orbit of $H_l$.
\end{enumerate}
\end{prop}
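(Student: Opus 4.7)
The plan is to identify each $\gamma \in \Gamma_4^3$ with the vector $o_\gamma \in V := (\bb Z/2)^{\binom{[4]}{3}}$, where each coordinate records the cyclic orientation on one of the four $3$-element subsets of $[4]$ relative to a fixed canonical orientation (e.g., ``$(abc)$ with $a<b<c$ is $0$''). The map $\gamma \mapsto o_\gamma$ is injective by the remark preceding the proposition. The key lemma, which drives everything else, is that for each $\alpha = (ab) \in \Gamma_4^2$ the permutation $\pi_\alpha$ acts under this identification by XOR with the indicator vector $v_\alpha \in V$ supported on the pair of faces containing $\{a,b\}$. I would verify this lemma by checking the two defining rules face by face. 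For rule~1, $\pi_{(ij)}((ijk)) = (jil)$: the faces $\{i,j,k\}$ and $\{i,j,l\}$ (those containing $\{i,j\}$) have their cyclic orientations reversed in $o_{(jil)}$ versus $o_{(ijk)}$, since $(jik)$ is the reverse of $(ijk)$ and $(jil)$ is the reverse of $(ijl)$, while $\{j,k,l\}$ and $\{k,i,l\}$ are unchanged since $(ljk) = (jkl)$ and $(ilk) = (kil)$. Rule~2 is analogous.

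Once the key lemma is in hand, part (a) follows formally. Since each generator is XOR by a fixed vector, the generators pairwise commute and are involutions, so $G_{\Gamma_4^3}$ is an elementary abelian $2$-group, isomorphic to the subgroup $W \subseteq V$ generated by the $v_\alpha$'s. Each $v_\alpha$ has weight~$2$, so $W$ is contained in the index-$2$ even-weight subgroup $V_0 \subseteq V$, which has dimension~$3$. A quick linear-algebra check (e.g., $v_{(12)}, v_{(13)}, v_{(23)}$ are linearly independent, and the remaining $v_\alpha$'s are sums of these) gives $W = V_0$, of size~$8$. The translation action of $W$ on $V$ is free, so the $W$-orbit of $o_\gamma$ has size $8 = |\Gamma_4^3|$; since this orbit sits inside the image of $\gamma \mapsto o_\gamma$ (which also has size~$8$), it equals the image, giving transitivity.

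For part (b), the generators of $H_l$ are the $\pi_{(il)}$ for $i \in [4] \minus \{l\}$. The support of $v_{(il)}$ is $[4] \minus \{i,l\}$, which omits $l$; in particular no generator flips the coordinate at the face $\{i,j,k\} = [4] \minus \{l\}$. Hence $H_l$ fixes that coordinate and therefore preserves $\Gamma_4^3(ijk)$, which is the subset of the image where that coordinate takes the prescribed value coming from $(ijk)$. The coordinate is nontrivial on $V_0$ (e.g., $v_{(12)}$ has value $1$ there), so each of its two values occurs on exactly $4$ elements of the image, i.e.\ $|\Gamma_4^3(ijk)| = 4$. The three generators $v_{(1l)}, v_{(2l)}, v_{(3l)}$ sum to $0$ and are otherwise linearly independent (any two have distinct supports), so $|H_l| = 4$. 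Since $H_l$ still acts freely, $\Gamma_4^3(ijk)$ is a single $H_l$-orbit.

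The main obstacle is the face-by-face verification of the key lemma: it is a finite but slightly fiddly check in which one must carefully track which cyclic rotations of a given $3$-subset coincide (a cyclic shift of three elements preserves the orientation) and which reverse it (a single transposition reverses), in particular noticing that $(ijl)$ and $(jil)$ are opposite orientations while $(jkl)$ and $(ljk)$ are the same. After that, everything reduces to transparent linear algebra in $V_0 \cong (\bb Z/2)^3$.
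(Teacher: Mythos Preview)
Your proof is correct and follows essentially the same approach as the paper: both identify $\gamma$ with the tuple $o_\gamma$ of face-orientations, verify that each generator $\pi_{(ij)}$ acts by negating precisely the two faces containing $\{i,j\}$, and read off the group-theoretic consequences from the resulting embedding into $\bb Z_2^4$. Your write-up is somewhat more explicit than the paper's (you identify the image group as the even-weight subgroup $V_0$ and do the linear algebra for $H_l$ openly, whereas the paper leaves these as easy checks), but the underlying argument is the same.
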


\begin{proof}
Since each $\gamma$ is determined by $o_\gamma$, we can view $G_{\Gamma_4^3}$ as an action on the set of functions $o_\gamma$.
We check that for all distinct $i$, $j$, $k$, $l \in [4]$ and $\gamma \in \Gamma_4^3$, we have
\begin{align*}
o_{\pi_{(ij)}\gamma}(\{i,j,k\}) &= -o_\gamma(\{i,j,k\}) \\
o_{\pi_{(ij)}\gamma}(\{i,j,l\}) &= -o_\gamma(\{i,j,l\}) \\
o_{\pi_{(ij)}\gamma}(\{j,k,l\}) &= o_\gamma(\{j,k,l\}) \\
o_{\pi_{(ij)}\gamma}(\{k,i,l\}) &= o_\gamma(\{k,i,l\}).
\end{align*}
It follows that we can embed $G_{\Gamma_4^3}$ as a subgroup of $\bb Z_2^4$. This implies that every element of $G_{\Gamma_4^3}$ is an involution and $G_{\Gamma_4^3}$ is abelian. It is also easy to check from the above action on the $o_\gamma$ that every element of $\Gamma_4^3$ has orbit of size 8, and hence $G_{\Gamma_4^3}$ is transitive.

From the above action on $o_\gamma$, we see that $H_l$ maps $\Gamma_4^3(ijk)$ to itself and every element of $\Gamma_4^3(ijk)$ has orbit of size 4 under $H_l$. Since $\abs{\Gamma_4^3(ijk)} = 4$, $\Gamma_4^3(ijk)$ is an orbit of $H_l$.
\end{proof}

\subsection{A non-uniform realizable oriented matroid and a lifting}

We will now construct a non-uniform realizable oriented matroid and one of its liftings. Our desired uniform oriented matroid will be obtained by perturbing this matroid.

Let $N$ be a positive integer to be determined later. Let
\[
E = \{ (i,j,r) : 1 \le i, j \le 4, i \neq j, -N \le r \le N \} \ / \ (i,j,r) \sim (j,i,-r).
\]
That is, the element $(i,j,r) \in E$ is identified with $(j,i,-r) \in E$. Let $M = (E,\mc L)$ be the oriented matroid of the vector configuration $\{ v_e \}_{e \in E}$, where
\[
v_{(i,j,r)} = e_{ij} \text{ if } i < j.
\]

We construct a lifting of $M$. First, let $\mc B$ be the hyperplane arrangement $\{H_e\}_{e \in E}$ where
\[
H_{(i,j,r)} = \{ x \in \bb{TP}^3 : x_i - x_j = r \}
\]
and $H_{(i,j,r)}$ is oriented so that the $\alpha_{ij}$ side of $H_{(i,j,r)}$ is in the $e_{ij}$ direction. The intersection of $\mc B$ with the sphere at infinity is a topological representation of $M$, and hence $\mc B$ defines a lifting $M_{\mc B} = (E \cup \{f\}, \mc L_{\mc B})$ of $M$.

Let $Q$ be the set of $x \in \bb{TP}^3$ such that if $ijkl$ is a permutation of $[4]$ such that $x_i \ge x_j \ge x_k \ge x_l$, then $x_i - x_j$, $x_j - x_k$, and $x_k - x_l$ are integers at most $N$. Let $Q^\star$ be the set of $x \in Q$ such that $\abs{x_i - x_j} \le N$ for all $i$, $j \in [n]$. For each $x \in Q$, the set of hyperplanes
\[
\mc B(x) := \{H_{(i,j,x_i-x_j)} : 1 \le i < j \le 4, \abs{x_i - x_j} \le N\}
\]
intersect at $x$. If $x \in Q^\star$, then $\mc B(x)$ is isomorphic to the braid arrangement $\mc B_0$.

We now construct a maximal element of $\mc F(M)$ by deforming the arrangement $\mc B$. For each $e = (i,j,r) \in E$, let $g_e$ be an independent random element of $G_{\Gamma_4^3}$ which is 1 with probability $1/2$ and $\pi_{(ij)}$ with probability $1/2$. For each $x \in Q$, define
\[
\gamma(x) := \left( \prod_{1 \le i < j \le 4} g_{(i,j,x_i-x_j)} \right) (123) \in \Gamma_4^3.
\]
For each $x \in Q$, we have an injective map $\mc B(x) \to \mc B_{\gamma(x)}$ where $H_{(i,j,x_i-x_j)} \mapsto H_{(ij)}^{\gamma(x)}$. The image of this map is a subarrangement of $\mc B_{\gamma(x)}$, and there is a canonical way to shift the hyperplanes of $\mc B(x)$ to obtain an arrangement $\mc B(x)'$ which is isomorphic to this image. Now, we construct a pseudohyperplane arrangement $\mc B'$ from $\mc B$ as follows: For all $x \in Q$, we deform $\mc B(x)$ so that in a small (i.e.\ radius $\ll 1$) open neighborhood $U$ around $x$ we have the arrangement $\mc B(x)'$, and at infinity the arrangement is unchanged. This deformation of $\mc B(x)$ is not necessarily local to $U$ because if the hyperplanes $H_{(i,j,r)}$, $H_{(j,k,s)}$, $H_{(k,i,t)}$ are in $\mc B(x)$, then they intersect at a line in $\mc B(x)$, but their deformations do not mutually intersect in $\mc B(x)'$. To show that $\mc B'$ is well-defined, we need to show that for any three such hyperplanes, the restriction of $\mc B(x)'$ to the deformations of these hyperplanes is the same for any $\mc B(x)$ which contains these hyperplanes.

Suppose that $x^1$, $x^2 \in Q$ and $H_{(i,j,r)}$, $H_{(j,k,s)}$, $H_{(k,i,t)}$ are hyperplanes contained in both $\mc B(x^1)$ and $\mc B(x^2)$. Hence,
\begin{align*}
x_i^1 - x_j^1 = x_i^2 - x_j^2 &= r \\
x_j^1 - x_k^1 = x_j^2 - x_k^2 &= s \\
x_k^1 - x_i^1 = x_k^2 - x_i^2 &= t.
\end{align*}
By Proposition~\ref{groupaction}(b), for any $x \in Q$, $o_{\gamma(x)}(\{i,j,k\})$ depends only on $g_{(i,j,x_i-x_j)}$, $g_{(j,k,x_i-x_j)}$, and $g_{(k,i,x_k-x_i)}$. Hence, $o_{\gamma(x^1)}(\{i,j,k\}) = o_{\gamma(x^2)}(\{i,j,k\})$. From the discussion in Section~\ref{sec:grpaction}, this implies that the restrictions of $\mc B(x^1)'$ and $\mc B(x^2)'$ to the deformations of $H_{(i,j,r)}$, $H_{(j,k,s)}$, $H_{(k,i,t)}$ are isomorphic arrangements, with the canonical isomorphism. Thus, $\mc B'$ is a well-defined pseudohyperplane arrangement.\footnote{A proof which does not use topological arguments can be found in \cite{Liu16}.}

The intersection of $\mc B'$ with the sphere at infinity is a topological representation of $M$, so we obtain a lifting $M' = (E \cup \{f\}, \mc L')$ of $M$ such that $(\mc L')^+ := \{X \in \mc L' : X(f) = +\}$ is topologically represented by $\mc B'$. By construction, $M'$ is maximal in $\mc F(M)$ and $M' \weak M_{\mc B}$. The following proposition follows from Proposition~\ref{Bgamma} and the properties of $\mc B$.

\begin{prop} \label{Bprime}
Let $x \in Q^\star$, and assume $\gamma(x) = (ijk)$. Let $\{l\} = [4] \minus \{i,j,k\}$.
\begin{enumerate} \renewcommand{\labelenumi}{(\alph{enumi})}
\item For all $-N \le r \le N$ with $r \ge x_j - x_k$, and for all $p \in \{i,j,k\}$, there are cocircuits $X_1$, $X_2$, and $X_3 \in (\mc L')^+$ satisfying
\begin{alignat*}{3}
X_1(j,k,r) &= \alpha_{kj} \qquad& X_1(k,i,x_k-x_i) &= 0 	            \qquad& X_1(i,j,x_i-x_j) &= 0 	\\	     
X_2(j,k,r) &= 0 	       \qquad& X_2(k,i,x_k-x_i) &= \alpha_{ik} \qquad& X_2(i,j,x_i-x_j) &= 0 	\\	     
X_3(j,k,r) &= 0 	       \qquad& X_3(k,i,x_k-x_i) &= 0 	            \qquad& X_3(i,j,x_i-x_j) &= \alpha_{ji}
\end{alignat*}
and
\[
X_1(p,l,x_p-x_l) = 0 \qquad X_2(p,l,x_p-x_l) = 0 \qquad X_3(p,l,x_p-x_l) = 0.
\]
\item There is a cocircuit $X \in (\mc L')^+$ satisfying
\begin{alignat*}{2}
X(j,k,x_j-x_k) &= \alpha_{kj} \qquad& X(i,l,x_i-x_l) &= 0 \\
X(k,i,x_k-x_i) &= \alpha_{ik} \qquad& X(j,l,x_j-x_l) &= 0 \\
X(i,j,x_i-x_j) &= \alpha_{ji} \qquad& X(k,l,x_k-x_l) &= 0.
\end{alignat*}
and for any distinct $p$, $q \in [4]$ and any $-N \le u \le N$ with $u \neq x_p - x_q$,
\[
X(p,q,u) = \sign(x_p-x_q-u)\alpha_{pq}.
\]
\end{enumerate}
\end{prop}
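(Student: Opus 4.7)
The strategy is to exploit two features of the construction of $\mc B'$. First, by construction the restriction of $\mc B'$ to a small neighborhood $U$ of $x$ coincides with $\mc B(x)'$, which is combinatorially isomorphic (via $H_{(p,q,x_p-x_q)} \leftrightarrow H_{(pq)}^{\gamma(x)}$) to the image of $\mc B(x)$ inside $\mc B_{\gamma(x)}$; since both arrangements use the orientation convention ``the $\alpha_{pq}$-side is in direction $e_{pq}$'', this identification is sign-preserving on the six hyperplanes of $\mc B(x)$. Second, whenever $(p,q,u) \in E$ with $u \neq x_p - x_q$, the fact that $x \in Q^\star$ forces $H_{(p,q,u)}$ to lie at distance $\ge 1$ from $x$ in the $e_{pq}$-coordinate (since coordinate differences of $x$ are integers); choosing $U$ of radius $\ll 1$ ensures such a hyperplane misses $U$ entirely, so every point of $U$ shares with $x$ the sign $\sign(x_p-x_q-u)\alpha_{pq}$ on $H_{(p,q,u)}$.

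For part (b), I would take $X \in (\mc L')^+$ to be the unique cocircuit vanishing on $(i,l,x_i-x_l), (j,l,x_j-x_l), (k,l,x_k-x_l)$; its representative in $\mc B'$ lies in $U$ and corresponds, via the identification above, to the $(0,0,0,0)$-cocircuit of Proposition~\ref{Bgamma}(b). The three non-zero signs and three zero signs on elements of the form $(p,q,x_p-x_q)$ then transfer directly from Proposition~\ref{Bgamma}(b), while the formula $X(p,q,u) = \sign(x_p-x_q-u)\alpha_{pq}$ for $u \neq x_p - x_q$ comes from the second observation.

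For part (a), I would take $X_1$ to be the cocircuit vanishing on $(k,i,x_k-x_i), (i,j,x_i-x_j), (p,l,x_p-x_l)$ (this choice does not depend on $r$) and, for each $r \ge x_j - x_k$, take $X_2, X_3$ to be the cocircuits also vanishing on $H_{(j,k,r)}$. When $r = x_j - x_k$ all three representatives lie in $U$ and Proposition~\ref{Bgamma}(a) applies directly. When $r > x_j - x_k$, the claim $X_1(j,k,r) = \alpha_{kj}$ follows from the second observation (since $x_j - x_k < r$); and $X_2, X_3$ now have representatives near the ideal intersection point $y$ of their three defining hyperplanes in $\mc B$, which a short calculation shows lies at distance $\ge r - (x_j - x_k) \ge 1$ from $H_{(k,i,x_k-x_i)}$ and $H_{(i,j,x_i-x_j)}$ on the $\alpha_{ik}$ and $\alpha_{ji}$ sides respectively, so the small local deformations of $\mc B$ that produce $\mc B'$ cannot flip these signs.

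The principal bookkeeping obstacle is tracking orientations through the identification between $\mc B(x)'$ and the image of $\mc B(x)$ in $\mc B_{\gamma(x)}$, but since both arrangements adopt the same $\alpha_{pq}$-in-$e_{pq}$-direction convention, this is immediate. No deeper difficulty arises, because the deformations producing $\mc B'$ are by construction confined to radius-$\ll 1$ neighborhoods while all the ``distant'' hyperplanes relevant to the sign computations are at distance $\ge 1$.
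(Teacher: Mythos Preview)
Your proposal is correct and follows the same approach the paper gestures at: the paper's proof is the single line ``follows from Proposition~\ref{Bgamma} and the properties of $\mc B$,'' and you have simply unpacked what those two ingredients are---the sign-preserving local identification $\mc B(x)'\cong\mc B_{\gamma(x)}$ near $x$, and the fact that hyperplanes $H_{(p,q,u)}$ with $u\neq x_p-x_q$ sit at integer distance $\ge 1$ from $x$ and so are untouched by the radius-$\ll 1$ deformations. Your handling of the case $r>x_j-x_k$ in part~(a), where $X_2,X_3$ must be read off near a point $y\neq x$, is the only place requiring care beyond a direct transcription of Proposition~\ref{Bgamma}, and your distance argument there is sound (the three defining hyperplanes of $X_2$, resp.\ $X_3$, are in general position, their intersection in $\mc B'$ stays within $\ll 1$ of their intersection $y$ in $\mc B$, and $y$ lies at distance $r-(x_j-x_k)\ge 1$ from $H_{(k,i,x_k-x_i)}$, resp.\ $H_{(i,j,x_i-x_j)}$, on the required side).
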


\subsection{A uniform realizable oriented matroid}

We now perturb $M$ and $M'$. Let $0 < \delta \ll 1$ be a small real number. Let
\[
\Delta := \{ \delta(e_i + e_j - e_k - e_l) : i, j, k, l \in [4] \text{ are distinct} \}.
\]
For each $u \in \Delta$ and $i \in [4]$, define $u^i \in \{\pm 1\}$ as follows: If $u = \delta(e_i + e_j - e_k - e_l)$, then
\[
u^i = u^j = 1 \qquad u^k = u^l = -1.
\]

For each $e \in E$, let $\eta_e := u_e + \epsilon_e$, where $\epsilon_e$ is a generic element of $\bb{TP}^3$ with $\norm{\epsilon_e} \ll \delta$, and $u_e$ is chosen independently and uniformly at random from $\Delta$. Let $\tilde v_e := v_e + \eta_e$. Let $\tilde M = (E, \tilde{\mc L})$ be the oriented matroid of the configuration $\{\tilde v_e\}_{e \in E}$. Since the $\epsilon_e$ are generic, $\tilde M$ is uniform.

Let $H_e'$ be the deformation of $H_e$ in $\mc B'$. We can tilt each pseudohyperplane $H_e'$ ``near infinity'' to obtain a pseudohyperplane $\tilde {H_e'}$ whose normal vector far away from the origin is $\tilde v_e$. This gives an arrangement $\tilde{\mc B'} = \{\tilde {H_e'}\}_{e \in E}$ whose intersection with the sphere at infinity is a topological representation of $\tilde M$. If $\delta$ is small enough, this can be done so that within a sphere $S$ of radius $100N$ around the origin, the arrangement $\tilde{\mc B'}$ is the same as $\mc B'$. In other words, $\tilde{\mc B'}$ defines a lifting $\tilde{M'} = (E \cup \{f\}, \tilde{\mc L'})$ of $\tilde M$ such that $\tilde{M'} \weak M'$. In particular, we have $(\tilde{\mc L'})^+ \supseteq (\mc L')^+$, where $(\tilde{\mc L'})^+ := \{ X \in \tilde{\mc L'} : X(f) = +\}$.

For any distinct $i$, $j$, $k \in [4]$ and any $-N \le r,s,t \le N$, the pseudohyperplanes $\tilde{H'}_{(i,j,r)}$, $\tilde{H'}_{(j,k,s)}$, and $\tilde{H'}_{(k,i,t)}$ in $\tilde{\mc B'}$ intersect at a point which is far from the origin; i.e., outside of $S$. This point will either be far in the $e_l$ direction or far in the $-e_l$ direction, where $\{l\} = [4] \minus \{i,j,k\}$. The correct direction depends on the arrangement of $H_{(i,j,r)}'$, $H_{(j,k,s)}'$, $H_{(k,i,t)}'$ in $\mc B'$ and the sign
\[
\beta_{ijk}(r,s,t) := \sign \left( \alpha_{ij}u_{(i,j,r)}^l + \alpha_{jk}u_{(j,k,s)}^l + \alpha_{ki}u_{(k,i,t)}^l \right).
\]
More precisely, we have the following.

\begin{prop} \label{peak}
Let $i$, $j$, $k$, $l \in [4]$ be distinct and $-N \le r,s,t \le N$. Suppose that there exists $X_0 \in (\mc L')^+$ such that
\[
X_0(i,j,r) = \alpha_{ji} \qquad X_0(j,k,s) = \alpha_{kj} \qquad X_0(k,i,t) = \alpha_{ik}.
\]
Then there exists a cocircuit $X \in (\tilde{\mc L}')^+$ such that
\[
X(i,j,r) = 0 \qquad X(j,k,s) = 0 \qquad X(k,i,t) = 0
\]
and for all $p \in \{i,j,k\}$ and $-N \le u \le N$,
\[
X(l,p,u) =  \alpha_{lp} \cdot \beta_{ijk}(r,s,t).
\]
\end{prop}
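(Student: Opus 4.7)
The plan is to identify the desired cocircuit $X$ with the unique triple intersection point $P$ of the pseudohyperplanes $\tilde{H'_{(i,j,r)}}, \tilde{H'_{(j,k,s)}}, \tilde{H'_{(k,i,t)}}$ in $\tilde{\mc B'}$, showing that $P$ lies far from the origin in the $\beta e_l$ direction, where $\beta := \beta_{ijk}(r,s,t)$. Within the sphere $S$ of radius $100N$, $\tilde{\mc B'}$ coincides with $\mc B'$, in which these three pseudohyperplanes do not mutually intersect. Outside $S$ they approximate affine hyperplanes whose signed normals (pointing to the $+$ side) satisfy $\tilde\nu_m = e_{pq} + \alpha_{pq}\eta_{(p,q,\cdot)}$; summing gives $\tilde\nu_1 + \tilde\nu_2 + \tilde\nu_3 = \alpha_{ij}\eta_{(i,j,r)} + \alpha_{jk}\eta_{(j,k,s)} + \alpha_{ki}\eta_{(k,i,t)}$, whose $e_l$-component equals $\delta(\alpha_{ij}u_{(i,j,r)}^l + \alpha_{jk}u_{(j,k,s)}^l + \alpha_{ki}u_{(k,i,t)}^l) + O(\epsilon)$, of sign $\beta$ by definition. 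Genericity of $\epsilon_e$ ensures the $\tilde\nu_m$ are linearly independent, so the three hyperplanes meet at a unique point $P$; since $\|\sum\tilde\nu_m\| = O(\delta)$ is small, $P$ is far from the origin.

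The crux is locating $P$. Dotting each hyperplane equation $\langle P, \tilde\nu_m\rangle = \tilde c_m$ with $\sum\tilde\nu_m$ and summing yields $\langle P, \sum_m\tilde\nu_m\rangle = \sum_m \tilde c_m$ (since $e_{ij} + e_{jk} + e_{ki} = 0$, the ``large'' pieces of $\tilde\nu_m$ cancel in the sum). Writing $P \approx t e_l$ makes the left side $t \cdot \delta C$, where $C := \alpha_{ij}u_{(i,j,r)}^l + \alpha_{jk}u_{(j,k,s)}^l + \alpha_{ki}u_{(k,i,t)}^l$ has sign $\beta$ and $|C| \ge 1$, so $t = \sum_m\tilde c_m / (\delta C)$. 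The effective shifts satisfy $\sum_m\tilde c_m = r+s+t + O(N\delta)$, and the $X_0$ hypothesis constrains this sum to be positive: at a point realizing the signs $(\alpha_{ji}, \alpha_{kj}, \alpha_{ik})$ one has (approximately) $x_i - x_j < r$, $x_j - x_k < s$, $x_k - x_i < t$, so summing gives $r+s+t \ge 0$, with equality only in the degenerate case in which the tube arises from local deformations of $\mc B'$ along the common line $L$ of $H_{(i,j,r)}, H_{(j,k,s)}, H_{(k,i,t)}$, whose orientations then force $\sum_m\tilde c_m > 0$. This yields $\sign(t) = \beta$ and $|t| = \Omega(1/\delta)$, so $P$ lies far in direction $\beta e_l$.

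The cocircuit $X$ at $P$ is then determined: $X(i,j,r) = X(j,k,s) = X(k,i,t) = 0$ since $P$ lies on those pseudohyperplanes, $X(f) = +$ since $P \in \bb R^3 \cong S^3_{f^+}$, and for $(l,p,u)$ with $p\in\{i,j,k\}$ and $|u|\le N$, the pseudohyperplane $\tilde{H'_{(l,p,u)}}$ has normal close to $e_{lp}$ with shift near $u$ of magnitude $\le N$, while $P_l - P_p$ has magnitude $\Omega(1/\delta) \gg N$ and sign $\beta$, giving $X(l,p,u) = \alpha_{lp}\cdot\sign(P_l - P_p - u) = \alpha_{lp}\cdot\beta$. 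Generic $\epsilon_e$ ensures no fourth pseudohyperplane passes through $P$, so $X$ is a genuine cocircuit. The hardest step is the claim in the second paragraph that, in the degenerate case $r+s+t = 0$, the orientation of the deformations really forces $\sum_m\tilde c_m > 0$; this requires a careful geometric analysis of how the wiggling of $\mc B'$ near integer points of $L$ affects the effective positions of the three pseudohyperplanes at the boundary of $S$.
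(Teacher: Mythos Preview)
Your overall strategy matches the paper's: identify the cocircuit $X$ with the triple intersection point $P$ of $\tilde{H'}_{(i,j,r)}, \tilde{H'}_{(j,k,s)}, \tilde{H'}_{(k,i,t)}$, and show that $P$ lies far from the origin in the $\beta_{ijk}(r,s,t)\,e_l$ direction. You also correctly isolate the governing quantity $\sum_m \tilde\nu_m$, whose $e_l$--component has sign $\beta$.

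The difference, and the source of your acknowledged gap, is how the hypothesis on $X_0$ is used. You use it only to force $r+s+t \ge 0$ via the inequalities $x_i - x_j < r$, $x_j - x_k < s$, $x_k - x_i < t$, and then try to show $\sum_m \tilde c_m > 0$ from $\sum_m \tilde c_m \approx r+s+t$. This leaves the case $r+s+t=0$ unfinished (and this case genuinely occurs in the applications later in the paper, e.g.\ when $y=x$). The paper extracts more from $X_0$: the existence of a covector with signs $(\alpha_{ji},\alpha_{kj},\alpha_{ik})$ says exactly that the three pseudohyperplanes in $\mc B'$ form a triangular tube isomorphic, as an oriented arrangement, to the reference arrangement~\eqref{triple}. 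The paper then computes with the model, where the three affine constants are all $1$ and hence sum to $3$; after introducing the coefficients $c_1,c_2,c_3$ with $|c_i-1|<C\delta$ that kill the $e_l^\perp$--components of the normals, one gets $\langle x,\, c_1\alpha_{ij}\eta^l_{ij} + c_2\alpha_{jk}\eta^l_{jk} + c_3\alpha_{ki}\eta^l_{ki}\rangle = c_1+c_2+c_3 > 0$ outright, with no case split on $r+s+t$.

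The point is that the sign of the sum of effective shifts is not an analytic accident to be recovered from $r+s+t$; it is a combinatorial invariant of the tube's orientation, and that orientation is precisely what $X_0$ certifies. Once you recast the $X_0$ hypothesis as ``the sub-arrangement is isomorphic to \eqref{triple},'' the degenerate case $r+s+t=0$ disappears and your ``careful geometric analysis of how the wiggling of $\mc B'$ near integer points of $L$ affects the effective positions'' is no longer needed.
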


\begin{proof}
The existence of $X_0$ implies that the restriction of $\mc B'$ to $\{H_{(i,j,r)}', H_{(j,k,s)}', H_{(k,i,t)}'\}$ is isomorphic (with the usual isomorphism) to the arrangement \eqref{triple}. Thus, the restriction of $\tilde{\mc B}'$ to $\tilde{H'}_{(i,j,r)}$, $\tilde{H'}_{(j,k,s)}$, and $\tilde{H'}_{(k,i,t)}$ is an arrangement whose behavior away from the origin is the same as the arrangement
\begin{align*}
\tilde {H_{ij}'} &= \{x \in \bb{TP}^3 : \langle x, e_{ij} + \alpha_{ij}\eta_{(i,j,r)} \rangle = 1 \} \\
\tilde {H_{jk}'} &= \{x \in \bb{TP}^3 : \langle x, e_{jk} + \alpha_{jk}\eta_{(j,k,s)} \rangle = 1 \} \\
\tilde {H_{ki}'} &= \{x \in \bb{TP}^3 : \langle x, e_{ki} + \alpha_{ki}\eta_{(k,i,t)} \rangle = 1 \}
\end{align*}
obtained by tilting the hyperplanes in arrangement \eqref{triple}.

Let $x$ be the intersection of $\tilde {H_{ij}'}$, $\tilde {H_{jk}'}$, and $\tilde {H_{ki}'}$. Let $\eta_{(i,j,r)} = \eta_{ij}^l + \eta_{ij}^\perp$, where $\eta_{ij}^l$ is parallel to $e_l$ and $\eta_{ij}^\perp$ is orthogonal to $e_l$. Similarly let $\eta_{(j,k,s)} = \eta_{jk}^l + \eta_{jk}^\perp$ and $\eta_{(k,i,t)} = \eta_{ki}^l + \eta_{ki}^\perp$. The vectors
\[
e_{ij} + \alpha_{ij}\eta_{ij}^\perp,\, e_{jk} + \alpha_{jk}\eta_{jk}^\perp,\, e_{ki} + \alpha_{ki}\eta_{ki}^\perp
\]
lie in the 2-dimensional subspace of $\bb{TP}^3$ orthogonal to $e_l$. Hence, there are $c_1$, $c_2$, $c_3 \in \bb R$ such that
\[
c_1(e_{ij} + \alpha_{ij}\eta_{ij}^\perp) + c_2(e_{jk} + \alpha_{ij}\eta_{jk}^\perp) + c_3(e_{ki} + \alpha_{ki}\eta_{ki}^\perp) = 0.
\]
Since $e_{ij} + e_{jk} + e_{ki} = 0$ and $\norm{\eta_{ij}^\perp}$, $\norm{\eta_{jk}^\perp}$, $\norm{\eta_{ki}^\perp} < 3\delta$, we can choose $c_1$, $c_2$, and $c_3$ so that $\abs{c_i-1} < C\delta$ for all $i$ and some constant $C$ independent of $\delta$.

Now, we have
\begin{align}
c_1 \langle x, e_{ij} + \alpha_{ij}\eta_{(i,j,r)} \rangle + c_2 \langle x, e_{jk} + \alpha_{jk}\eta_{(j,k,s)} \rangle + c_3 \langle x, e_{ki} + \alpha_{ki}\eta_{(k,i,t)} \rangle &= c_1 + c_2 + c_3 \nonumber \\
\implies \ \langle x, c_1 \alpha_{ij} \eta_{ij}^l + c_2 \alpha_{jk} \eta_{jk}^l + c_3 \alpha_{ki} \eta_{ki}^l \rangle &= c_1 + c_2 + c_3 \nonumber \\
\implies \ \langle x, c_1 \alpha_{ij} \eta_{ij}^l + c_2 \alpha_{jk} \eta_{jk}^l + c_3 \alpha_{ki} \eta_{ki}^l \rangle &> 0 \label{c1c2c3}
\end{align}
where the last inequality holds for small enough $\delta$ since $\abs{c_i-1} < C\delta$. By the definition of $\eta_{ij}$, we have $\eta_{ij}^l = \delta u_{(i,j,r)}^l e_l + o(\delta) e_l$ where $o(\delta) \ll \delta$, and similarly for $\eta_{jk}^l$ and $\eta_{ki}^l$. Thus,
\[
c_1 \alpha_{ij} \eta_{ij}^l + c_2 \alpha_{jk} \eta_{jk}^l + c_3 \alpha_{ki} \eta_{ki}^l = \delta( c_1 \alpha_{ij} u_{(i,j,r)}^l + c_2 \alpha_{jk} u_{(j,k,s)}^l + c_3 \alpha_{ki} u_{(k,i,t)}^l ) e_l + o(\delta)e_l.
\]
Since $\abs{c_i-1} < C\delta$ for all $i$, this becomes
\begin{align*}
c_1 \alpha_{ij} \eta_{ij}^l + c_2 \alpha_{jk} \eta_{jk}^l + c_3 \alpha_{ki} \eta_{ki}^l &= \delta( \alpha_{ij} u_{(i,j,r)}^l + \alpha_{jk} u_{(j,k,s)}^l + \alpha_{ki} u_{(k,i,t)}^l ) e_l + O(\delta^2) e_l + o(\delta) e_l \\
&= \delta( \alpha_{ij} u_{(i,j,r)}^l + \alpha_{jk} u_{(j,k,s)}^l + \alpha_{ki} u_{(k,i,t)}^l ) e_l + o(\delta) e_l.
\end{align*}
Hence,
\begin{align*}
\sign \langle x, c_1 \alpha_{ij} \eta_{ij}^l + c_2 \alpha_{jk} \eta_{jk}^l &+ c_3 \alpha_{ki} \eta_{ki}^l \rangle \\
&= \sign \left( \delta( \alpha_{ij} u_{(i,j,r)}^l + \alpha_{jk} u_{(j,k,s)}^l + \alpha_{ki} u_{(k,i,t)}^l ) + o(\delta) \right) \sign \langle x, e_l \rangle \\
&= \beta_{ijk}(r,s,t) \cdot \sign \langle x, e_l \rangle
\end{align*}
since $\abs{\alpha_{ij} u_{(i,j,r)}^l + \alpha_{jk} u_{(j,k,s)}^l + \alpha_{ki} u_{(k,i,t)}^l} \ge 1$. With \eqref{c1c2c3}, we thus have $\sign \langle x, e_l \rangle = \beta_{ijk}(r,s,t)$.

Returning to $\tilde{\mc B'}$, we conclude that $\tilde{H'}_{(i,j,r)}$, $\tilde{H'}_{(j,k,s)}$, and $\tilde{H'}_{(k,i,t)}$ intersect at a point which is outside of $S$ and far in the $\beta_{ijk}(r,s,t) e_l$ direction. The cocircuit $X$ corresponding to this point is the desired cocircuit.
\end{proof}

We make some final definitions before proceeding. For each $x \in Q^\star$ and $l \in [4]$, define
\[
\beta_l(x) := \beta_{ijk}(x_i - x_j, x_j - x_k, x_k - x_i) \text{ where } o_{\gamma(x)}([4] \minus \{l\}) = (ijk).
\]
For each $x \in Q^\star$ and $i \in [4]$, define
\begin{align*}
R_{i,+}(x) &:= \{ x + ke_i \in Q^\star : k \in \bb Z_+ \} \\
R_{i,-}(x) &:= \{ x + ke_i \in Q^\star : k \in \bb Z_- \}.
\end{align*}
Note that at least one of $R_{i,+}(x)$, $R_{i,-}(x)$ has size at least $N/2$. Indeed, if $\abs{R_{i,+}(x)} = m$, then there is some $j \in [4] \minus \{i\}$ such that $x_i + m + 1 - x_j \ge N + 1$, and hence $x_i - x_j \ge N - m$. Similarly if $\abs{R_{i,+}(x)} = n$, then there is some $k \in [4] \minus \{i\}$ such that $x_k - x_i \ge N - n$. Thus $x_k - x_j \ge 2N - m - n$, and since $x_k - x_j \le N$, we obtain $m+n \ge N$.

\subsection{A special set $\Omega$}

We now define a special set $\Omega \subseteq Q^\star$. We show that with positive probability, it satisfies certain density conditions. This will be used to show disconnectedness of $G(\tilde M)$.

Let $\Omega$ be the set of all $x \in Q^\star$ such that if $\gamma(x) = (ijk)$ and $\{l\} = [4] \minus \{i,j,k\}$, then
\[
\abs{R_{i,\beta_i(x)}(x)}, \abs{R_{j,\beta_j(x)}(x)}, \abs{R_{k,\beta_k(x)}(x)} \ge N/2
\]
and
\begin{alignat*}{2}
\sign(\alpha_{il}u_{(i,l,x_i-x_l)}^j) &= \beta_j(x) \qquad& \sign(\alpha_{li}u_{(l,i,x_l-x_i)}^k) &= \beta_k(x) \\
\sign(\alpha_{jl}u_{(j,l,x_j-x_l)}^k) &= \beta_k(x) \qquad& \sign(\alpha_{lj}u_{(l,j,x_l-x_j)}^i) &= \beta_i(x) \\
\sign(\alpha_{kl}u_{(k,l,x_k-x_l)}^i) &= \beta_i(x) \qquad& \sign(\alpha_{lk}u_{(l,k,x_l-x_k)}^j) &= \beta_j(x).
\end{alignat*}

\begin{prop} \label{probabilities}
Let $i$, $j$, $k \in [4]$ be distinct and $-N \le r,s,t \le N$ be integers with $r+s+t = 0$. Let
\[
L = \{x \in Q^\star : x_i - x_j = r, x_j - x_k = s, x_k - x_i = t\}
\]
and suppose that $o_{\gamma(y)}(\{i,j,k\}) = (ijk)$ for some $y \in L$. Then for each $x \in L$, the probability that $\gamma(x) = (ijk)$ and $x \in \Omega$ is at least $1/864$, and these probabilities are mutually independent over $L$.
\end{prop}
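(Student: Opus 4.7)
The plan is to exploit the decomposition from Section~\ref{sec:grpaction} to split the random data into ``shared'' randomness (fixed across all $x \in L$) and ``individual'' randomness (varying with $x$), then compute the probability by conditional independence. As $x$ ranges over $L$, the differences $x_i - x_j$, $x_j - x_k$, $x_k - x_i$ are pinned at $r, s, t$, so the six random variables at pairs inside $\{i,j,k\}$---namely $g_{(i,j,r)}, g_{(j,k,s)}, g_{(k,i,t)}$ together with $u_{(i,j,r)}, u_{(j,k,s)}, u_{(k,i,t)}$---are shared across all $x \in L$, while the six variables at pairs involving $l$ depend on $x_l$ and are mutually independent across different $x$. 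By Proposition~\ref{groupaction}(b), $o_{\gamma(x)}(\{i,j,k\})$ depends only on the shared $g$'s, so the hypothesis of the proposition is a condition on these three variables that holds with probability exactly $1/2$. Conditioning on the hypothesis (so that $\gamma(x) \in \Gamma_4^3(ijk)$, a set of size $4$), the individual $g$'s act on $\gamma(x)$ by elements of $H_l$; since the relation $\pi_{(il)}\pi_{(jl)}\pi_{(kl)} = 1$ holds in $G_{\Gamma_4^3}$ (from the $\bb Z_2^4$-embedding in the proof of Proposition~\ref{groupaction}(a)), the map from the eight configurations of the three individual $g$'s to $H_l$ is $2$-to-$1$. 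Hence $\gamma(x)$ is uniform on $\Gamma_4^3(ijk)$, and $\gamma(x) = (ijk)$ with conditional probability $1/4$.

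Now suppose $\gamma(x) = (ijk)$. Then $x \in \Omega$ requires (A) the $R$-size inequalities and (B) the six sign equations. I would rewrite (B) as the assertion that each of the three individual $u$'s (namely $u_{(i,l,x_i-x_l)}$, $u_{(j,l,x_j-x_l)}$, $u_{(k,l,x_k-x_l)}$) takes prescribed values at two of its non-own indices in terms of some sign vector $\beta^\ast \in \{\pm 1\}^3$. A short calculation shows that (B) automatically forces $\beta_m(x) = \beta^\ast_m$ for $m \in \{i,j,k\}$ regardless of the values of the shared $u$'s, because the two forced terms dominate the single remaining (shared) term in each $\beta_m$-defining sum. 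Condition (A) then becomes the requirement $\beta^\ast \in \mc B_i \times \mc B_j \times \mc B_k$, where $\mc B_m = \{\beta \in \{\pm 1\} : \abs{R_{m,\beta}(x)} \ge N/2\}$ is nonempty for each $m$ by the observation at the end of the preceding subsection.

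For a fixed $\beta^\ast$, the three individual $u$'s are independent and each is uniform on $\Delta$; a pair of prescribed values at two non-own indices has probability $1/6$ (if the two signs agree) or $1/3$ (if they differ), and a short case analysis shows the product over the three $u_m$ equals $1/27$ when $\beta^\ast$ is constant and $1/108$ when $\beta^\ast$ is mixed---at least $1/108$ either way. Combining the three independent contributions $\tfrac{1}{2} \cdot \tfrac{1}{4} \cdot \tfrac{1}{108} = \tfrac{1}{864}$ gives the claimed lower bound. For the independence claim, conditional on the shared $g$'s satisfying the hypothesis, each event $\{\gamma(x) = (ijk),\, x \in \Omega\}$ depends only on the individual $g$'s and $u$'s at $x$, which form pairwise disjoint sets of independent variables across distinct $x \in L$; hence the events are (conditionally) mutually independent. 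The main obstacle is the case analysis in step (B), particularly tracking how the sign flips $\alpha_{pq} = -\alpha_{qp}$ arising from the symmetry $(a,b,r) \sim (b,a,-r)$ turn the six $\Omega$-conditions into a clean bi-coordinate restriction on each $u_m \in \Delta$; the rest follows routinely from the group-theoretic machinery already developed.
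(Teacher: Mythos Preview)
Your approach is essentially the same as the paper's: separate the randomness into the ``shared'' variables $g_{(i,j,r)},g_{(j,k,s)},g_{(k,i,t)}$ (and the corresponding $u$'s) and the ``individual'' variables indexed by pairs containing $l$, use Proposition~\ref{groupaction}(b) to see that the hypothesis pins $o_{\gamma(x)}(\{i,j,k\})$, then show that the $\Omega$ conditions follow from a sign pattern on the three individual $u$'s which forces each $\beta_m(x)$ regardless of the shared $u$. The independence argument is also the paper's: once you condition on the shared data, each event is a function of the individual variables at $x$, and these are disjoint across $L$.

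There is one genuine slip. The hypothesis ``$o_{\gamma(y)}(\{i,j,k\})=(ijk)$ for some $y\in L$'' is an \emph{assumption} in the statement, not an event whose probability you should fold into the product. Your factor $\tfrac12$ is therefore spurious: after the hypothesis, the event $\gamma(x)=(ijk)$ already has probability $\tfrac14$, and your (correct, and sharper than the paper's) case analysis on the three individual $u$'s gives at least $\tfrac{1}{108}$, so the conditional probability is at least $\tfrac14\cdot\tfrac{1}{108}=\tfrac{1}{432}\ge\tfrac{1}{864}$. You arrive at the right numerical target only because your refined $u$-bound ($\tfrac{1}{108}$ versus the paper's cruder $\tfrac{1}{216}=(\tfrac16)^3$) exactly cancels the extra $\tfrac12$. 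This is harmless for the conclusion, but you should drop the $\tfrac12$ and state the bound as $\tfrac14\cdot\tfrac{1}{108}$ (or, as the paper does, simply use $\tfrac16$ per $u$ to get $\tfrac14\cdot\tfrac{1}{216}=\tfrac{1}{864}$ without any case analysis).

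One small point worth making explicit in your write-up (the paper glosses over it too): although the $\beta_m(x)$ nominally depend on the shared $u_{(j,k,s)}$ etc., the six sign equations in the definition of $\Omega$ are \emph{equivalent} (on the event $\gamma(x)=(ijk)$) to the purely individual condition ``for each $m\in\{i,j,k\}$ the two individual-$u$ terms in $\beta_m(x)$ have the same sign,'' precisely because two agreeing $\pm1$ terms dominate the third. This is what makes the event $\{\gamma(x)=(ijk),\,x\in\Omega\}$ genuinely a function of the individual variables alone (given the shared $g$'s), and hence makes the independence claim go through.
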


\begin{proof}
By Proposition~\ref{groupaction}(b), the value of $o_{\gamma(x)}(\{i,j,k\})$ is the same for all $x \in L$. By assumption, this value is $(ijk)$. Now, for each $x \in L$, let $A(x)$ be the event that $\gamma(x) = (ijk)$ and let $B(x)$ be the event that $x \in \Omega$. By Proposition~\ref{groupaction}(b), whether $A(x)$ happens depends only on $g_{(i,l,x_i-x_l)}$, $g_{(j,l,x_j-x_l)}$, and $g_{(k,l,x_k-x_l)}$, where $l = [4] \minus \{i,j,k\}$. By Proposition~\ref{groupaction}(a)(b), the probability $A(x)$ happens is $1/4$.

Now fix $x \in L$. For each $p \in \{i,j,k\}$, there is some $\beta_p \in \{+,-\}$ such that $\abs{R_{p,\beta_p}} \ge N/2$. Now, the event that
\[
\sign(\alpha_{il}u_{(i,l,x_i-x_l)}^j) = \beta_j \qquad\text{and}\qquad \sign(\alpha_{li}u_{(l,i,x_l-x_i)}^k) = \beta_k
\]
depends only on $u_{(i,l,x_i-x_l)}$, and from the definition of $\Delta$, the probability this event occurs is at least $1/6$ for any choice of $\beta_j$ and $\beta_k$. Hence, the probability that all the events
\begin{alignat*}{2}
\sign(\alpha_{il}u_{(i,l,x_i-x_l)}^j) &= \beta_j \qquad& \sign(\alpha_{li}u_{(l,i,x_l-x_i)}^k) &= \beta_k \\
\sign(\alpha_{jl}u_{(j,l,x_j-x_l)}^k) &= \beta_k \qquad& \sign(\alpha_{lj}u_{(l,j,x_l-x_j)}^i) &= \beta_i \\
\sign(\alpha_{kl}u_{(k,l,x_k-x_l)}^i) &= \beta_i \qquad& \sign(\alpha_{lk}u_{(l,k,x_l-x_k)}^j) &= \beta_j
\end{alignat*}
occur is at least $1/216$. Now, assume that all these events occur. Since $A(x)$ is independent of these events, we can assume that $A(x)$ occurs as well. Then $o_{\gamma(x)}(\{j,k,l\}) = (jkl)$, and
\begin{align*}
\beta_i(x) &= \beta_{jkl}(x_j-x_k,x_k-x_l,x_l-x_i) \\
&= \sign \left( \alpha_{jk} u_{(j,k,s)}^i + \alpha_{kl}u_{(k,l,x_k-x_l)}^i + \alpha_{lj}u_{(l,j,x_l-x_j)}^i \right).
\end{align*}
Since $\sign(\alpha_{kl}u_{(k,l,x_k-x_l)}^i) = \sign(\alpha_{lj}u_{(l,j,x_l-x_j)}^i) = \beta_i$, the right hand side of this expression is $\beta_i$ no matter what $\alpha_{jk} u_{(j,k,s)}^i$ is. Hence $\beta_i(x) = \beta_i$. Similarly $\beta_j(x) = \beta_j$ and $\beta_k(x) = \beta_k$. It follows that $x \in \Omega$. This occurs with probability at least $(1/4)(1/216) = 1/864$.

Finally, the event $A(x) \cap B(x)$ depends only on the independent variables $g_{(i,l,x_i-x_l)}$, $g_{(j,l,x_j-x_l)}$, $g_{(k,l,x_k-x_l)}$, $u_{(i,l,x_i-x_l)}$, $u_{(j,l,x_j-x_l)}$, and $u_{(k,l,x_k-x_l)}$, and these variables are different for every $x \in L$. So these events are mutually independent over $L$.
\end{proof}

\begin{prop} \label{positiveprob}
For large enough $N$, with probability greater than 0, we have the following: $\Omega$ is nonempty, and for every $x \in \Omega$, if $\gamma(x) = (ijk)$, then the sets
\begin{align*}
S_i(x) &:= \{ y \in R_{i,\beta_i(x)}(x) \cap \Omega : \gamma(y) = (jkl) \} \\
S_j(x) &:= \{ y \in R_{j,\beta_j(x)}(x) \cap \Omega : \gamma(y) = (kil) \} \\
S_k(x) &:= \{ y \in R_{k,\beta_k(x)}(x) \cap \Omega : \gamma(y) = (ijl) \}
\end{align*}
are all nonempty.
\end{prop}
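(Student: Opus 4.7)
I would prove this by a union-bound / first-moment argument: upper-bound the probability that, for some $x \in \Omega$ and some $p \in \{i,j,k\}$, the set $S_p(x)$ is empty; sum this over all possibilities to show it vanishes as $N \to \infty$; and separately show that $P[\Omega \neq \emptyset]$ is bounded below by a constant independent of $N$.

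Fix $x \in Q^\star$, $(ijk) \in \Gamma_4^3$, $p \in \{i,j,k\}$, and $\beta \in \{+,-\}$, and consider the bad event
\[
\mathrm{Bad}(x,(ijk),p,\beta) := \{x \in \Omega,\ \gamma(x)=(ijk),\ \beta_p(x)=\beta,\ S_p(x)=\emptyset\}.
\]
Take (by symmetry) $p = i$. Along the line $L := R_{i,\beta}(x)$ in direction $e_i$, the differences $y_j - y_k,\, y_k - y_l,\, y_j - y_l$ are constant, so by Proposition~\ref{groupaction}(b), $o_{\gamma(y)}(\{j,k,l\})$ is constant along $L$; on the event $\{\gamma(x) = (ijk)\}$, this common value is $(jkl)$. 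Hence Proposition~\ref{probabilities} applies to $L$ with roles $(i',j',k',l') = (j,k,l,i)$ and target $(jkl)$, yielding for each $y \in L$ that $P[y \in \Omega \text{ and } \gamma(y) = (jkl)] \geq 1/864$, with mutual independence over $L$.

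The next step is to verify that these $y$-events are jointly independent of the $x$-event itself. Per the proof of Proposition~\ref{probabilities}, the sufficient condition depends only on the variables $g_{(p',i,y_{p'}-y_i)}$ and $u_{(p',i,y_{p'}-y_i)}$ for $p' \in \{j,k,l\}$. Writing $y = x + ke_i$ with $k \neq 0$, the third coordinate $y_{p'} - y_i = x_{p'} - x_i - k$ differs from any $x$-coordinate difference $x_{p''} - x_{q''}$ (under either orientation of $(p,q,r) \sim (q,p,-r)$), so these variables are distinct from those appearing in the $x$-event. Since the $x$-event forces $|L| \geq N/2$, one obtains
\[
P[\mathrm{Bad}(x,(ijk),i,\beta)] \leq (1 - 1/864)^{N/2}.
\]
Union-bounding over the $|Q^\star| \cdot 8 \cdot 3 \cdot 2 = O(N^3)$ such events yields $P[\text{some bad event}] \leq 48 |Q^\star|(1-1/864)^{N/2}$, which tends to $0$.

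Finally, pick $x_0 = 0 \in Q^\star$: then $|R_{p,\pm}(x_0)| = N$ for all $p$, and applying Proposition~\ref{probabilities} to $x_0$ (along any line, for each $(ijk) \in \Gamma_4^3$, with the extra factor of $1/2$ for the unconditional chance that the non-line $g$'s select the required orbit) gives $P[x_0 \in \Omega] \geq 8 \cdot (1/2)(1/864) = 1/216$. Combining,
\[
P[\Omega \neq \emptyset \text{ and no bad event}] \geq P[x_0 \in \Omega] - P[\text{some bad event}] \geq \tfrac{1}{216} - 48|Q^\star|(1-1/864)^{N/2},
\]
which is positive for $N$ large enough (a straightforward calculation shows $N \sim 10^5$ suffices). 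The main obstacle is the independence verification in the second paragraph: one must precisely track which $g$'s and $u$'s determine the $x$-event versus each $y$-event and check that the corresponding elements of $E$ do not coincide, which in the end reduces to the fact that $y_i \neq x_i$ implies the $y$-event's indices all differ from the $x$-event's indices.
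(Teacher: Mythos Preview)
Your approach is correct and is the same as the paper's: apply Proposition~\ref{probabilities} along each ray $R_{p,\beta}(x)$ to bound the probability that $S_p(x)=\emptyset$ by $(863/864)^{N/2}$, and then union-bound over all $x\in Q^\star$ and all choices of $p$. Your explicit check that the sufficient $y$-events use variables disjoint from those determining the $x$-event is in fact more careful than the paper's own proof, which simply asserts the bound and the nonemptiness of $\Omega$ directly from Proposition~\ref{probabilities} without further discussion.
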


\begin{proof}
Suppose $x \in \Omega$. By Proposition~\ref{probabilities}, the probability that $S_i(x)$ is empty is at most
\[
\left( \frac{863}{864} \right)^{\abs{R_{i,\beta_i(x)}(x)}} \le \left( \frac{863}{864} \right)^{N/2}
\]
and similarly for $S_j(x)$ and $S_k(x)$. By the union bound, the probability that at least one of these sets is empty is at most $3(863/864)^{N/2}$. Since $\abs{\Omega} \le \abs{Q^\star} \le (2N+1)^3$, the probability that this happens for at least one $x \in \Omega$ is at most
\[
3 (2N+1)^3 \left( \frac{863}{864} \right)^{N/2}.
\]
Finally, by Proposition~\ref{probabilities}, the probability that $\Omega$ is empty is at most $(863/864)^N$.
For large enough $N$, the number 
\[
3 (2N+1)^3 \left( \frac{863}{864} \right)^{N/2} + \left( \frac{863}{864} \right)^N
\]
is less than 1. So with positive probability, the desired property is satisfied.
\end{proof}

From now on, we will assume that the conclusion of Proposition~\ref{positiveprob} is satisfied.

\subsection{Proof that $G(\tilde{M})$ is disconnected}

We conclude by showing that $G(\tilde M)$ is disconnected. This proves Theorem~\ref{thm:disc} and disproves the extension space conjecture.

In the following, we will reuse the variables $M$ and $\mc L$ for convenience.

\begin{defn} \label{Gdag}
Let $G^\dagger$ denote the set of all uniform liftings $M = (E \cup \{f\}, \mc L)$ of $\tilde M$ which satisfy the following. (As usual, $\mc L^+ := \{X \in \mc L : X(f) = + \}$ and $\{l\} = [4] \minus \{i,j,k\}$.)
\begin{enumerate} \renewcommand{\labelenumi}{(\alph{enumi})}
\item For all $x \in \Omega$ and $i \in [4]$ with $\gamma(x) = (ijk)$, for all $y \in \Omega$ with $y_j - y_k \ge x_j - x_k$, and for all $p \in \{i,j,k\}$, there exist cocircuits $X_1$, $X_2$, $X_3 \in \mc L^+$ satisfying
\begin{alignat*}{3}
X_1(j,k,y_j-y_k) &= \alpha_{kj} \qquad& X_1(k,i,x_k-x_i) &= 0 	            \qquad& X_1(i,j,x_i-x_j) &= 0 \\
X_2(j,k,y_j-y_k) &= 0 	       \qquad& X_2(k,i,x_k-x_i) &= \alpha_{ik} \qquad& X_2(i,j,x_i-x_j) &= 0 \\
X_3(j,k,y_j-y_k) &= 0                \qquad& X_3(k,i,x_k-x_i) &= 0 		    \qquad& X_3(i,j,x_i-x_j) &= \alpha_{ji}
\end{alignat*}
and
\[
X_1(p,l,x_p-x_l) = 0 \qquad X_2(p,l,x_p-x_l) = 0 \qquad X_3(p,l,x_p-x_l) = 0.
\]
\item For all $x \in \Omega$ with $\gamma(x) = (ijk)$, there exists a cocircuit $X \in \mc L^+$ satisfying
\[
X(i,l,x_i-x_l) = 0 \qquad X(j,l,x_j-x_l) = 0 \qquad X(k,l,x_k-x_l) = 0
\]
in addition to the following:
\begin{enumerate} \renewcommand{\labelenumii}{(\roman{enumii})}
\item For all $y \in \Omega$ with $y_j - y_k \ge x_j - x_k$, $X(j,k,y_j-y_k) = \alpha_{kj}$.
\item For all $y \in \Omega$ with $y_k - y_i \ge x_k - x_i$, $X(k,i,y_k-y_i) = \alpha_{ik}$.
\item For all $y \in \Omega$ with $y_i - y_j \ge x_i - x_j$, $X(i,j,y_i-y_j) = \alpha_{ji}$.
\end{enumerate}
\item For all $x \in \Omega$ and $i \in [4]$ with $\gamma(x) = (ijk)$, and for all $y \in \Omega$ with $y_j - y_k \ge x_j - x_k$, there exists a cocircuit $X \in \mc L^+$ satisfying
\[
X(j,k,y_j-y_k) = 0 \qquad X(k,l,x_k-x_l) = 0 \qquad X(l,j,x_l-x_j) = 0
\]
and for all $z \in S_i(x)$ and $p \in \{j,k,l\}$,
\[
X(i,p,z_i-z_p) = \alpha_{ip} \cdot \beta_i(x).
\]
\end{enumerate}
\end{defn}

\begin{prop} \label{Gnonempty}
$\tilde{M'} \in G^\dagger$.
\end{prop}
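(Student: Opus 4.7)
I will verify conditions (a), (b), (c) of Definition~\ref{Gdag} for $\tilde{M'}$ in turn, using Propositions~\ref{Bprime} and~\ref{peak} together with the containment $(\tilde{\mc L'})^+ \supseteq (\mc L')^+$.

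For conditions (a) and (b), I plan to cite Proposition~\ref{Bprime}(a) and (b) essentially directly. In (a), taking $r = y_j - y_k$ in Prop.~\ref{Bprime}(a) (valid since $-N \le y_j - y_k \le N$ and $y_j - y_k \ge x_j - x_k$) produces cocircuits $X_1, X_2, X_3 \in (\mc L')^+ \subseteq (\tilde{\mc L'})^+$ with exactly the desired signs and zeros. I should check that these persist as cocircuits of $\tilde{M'}$, by arguing that they correspond to simple vertices (lying on just three pseudohyperplanes) of $\mc B(x)'$ inside the ball $S$ of radius $100N$, where $\tilde{\mc B'}$ coincides with $\mc B'$. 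For (b), the cocircuit $X$ of Prop.~\ref{Bprime}(b) gives the zero conditions on $(i,l,x_i-x_l), (j,l,x_j-x_l), (k,l,x_k-x_l)$ directly; the sign requirements (b)(i)--(iii) fall out of the explicit formula $X(p,q,u) = \sign(x_p - x_q - u)\alpha_{pq}$ for $u \neq x_p - x_q$, since for instance $y_j - y_k \ge x_j - x_k$ yields $X(j,k,y_j - y_k) = \alpha_{kj}$ in both the strict and equality subcases.

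Condition (c) is the substantive part. I plan to apply Proposition~\ref{peak} with its free variables $(i,j,k)$ taken to be our $(j,k,l)$ (so its $l$ becomes our $i$), and $(r,s,t) = (y_j - y_k,\, x_k - x_l,\, x_l - x_j)$. Its conclusion yields a cocircuit $X \in (\tilde{\mc L'})^+$ with the specified zeros and with $X(i,p,u) = \alpha_{ip} \cdot \beta_{jkl}(y_j - y_k,\, x_k - x_l,\, x_l - x_j)$ for $p \in \{j,k,l\}$ and all admissible $u$. To match condition (c) I need $\beta_{jkl}(y_j - y_k,\, x_k - x_l,\, x_l - x_j) = \beta_i(x)$. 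Expanding the definition of $\beta_{jkl}$, the last two summands $\alpha_{kl} u_{(k,l,x_k-x_l)}^i$ and $\alpha_{lj} u_{(l,j,x_l-x_j)}^i$ are forced by the defining clauses of $\Omega$ at $x$ to both equal $\beta_i(x)$ as elements of $\{\pm 1\}$, so their combined contribution $2\beta_i(x)$ overwhelms the remaining $\pm 1$ summand and fixes the overall sign at $\beta_i(x)$, independent of $y_j - y_k$.

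The main obstacle will be verifying the hypothesis of Proposition~\ref{peak}: the existence of $X_0 \in (\mc L')^+$ with $X_0(j,k,y_j - y_k) = \alpha_{kj}$, $X_0(k,l,x_k - x_l) = \alpha_{lk}$, $X_0(l,j,x_l - x_j) = \alpha_{jl}$. In the strict case $y_j - y_k > x_j - x_k$, the three defining inequalities on a candidate point sum to the strict inequality $0 < y_j - y_k - (x_j - x_k)$, so an open cell of $\mc B$ (and hence of $\mc B'$, chosen away from deformation neighborhoods) realizes them and supplies $X_0$. The equality case $y_j - y_k = x_j - x_k$ is harder, since then the three hyperplanes $H_{(j,k,x_j-x_k)}, H_{(k,l,x_k-x_l)}, H_{(l,j,x_l-x_j)}$ share a common line of intersection through $x$ in the original braid arrangement $\mc B$, so no such open region is available in $\mc B$ itself. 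Here I plan to exploit the deformation at $x$: under the isomorphism $\mc B(x)' \cong$ (a subarrangement of $\mc B_{\gamma(x)}$), these hyperplanes correspond to $H_{(jk)}^{\gamma(x)}, H_{(kl)}^{\gamma(x)}, H_{(jl)}^{\gamma(x)}$, and a point with local coordinates $(0,\epsilon,-\epsilon,0)$ for small $\epsilon > 0$ lands in the triangular region newly produced by the deformation and has exactly the desired signs there; its global sign vector is then the sought $X_0$.
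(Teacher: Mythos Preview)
Your proposal is correct and follows essentially the same approach as the paper: parts (a) and (b) come directly from Proposition~\ref{Bprime} together with $(\tilde{\mc L'})^+ \supseteq (\mc L')^+$, and part (c) is obtained by applying Proposition~\ref{peak} to the triple $(j,k,l)$, verifying its hypothesis by the same case split (strict inequality handled via an open cell of $\mc B$, equality handled via $o_{\gamma(x)}(\{j,k,l\}) = (jkl)$), and then computing $\beta_{jkl}(y_j - y_k, x_k - x_l, x_l - x_j) = \beta_i(x)$ from the defining clauses of $\Omega$. Your extra remarks about cocircuits persisting in $\tilde{M'}$ and your explicit witness point in the equality case are more detailed than the paper's treatment but entirely in the same spirit.
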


\begin{proof}
Property (a) follows from Proposition~\ref{Bprime}(a) and the fact that $(\tilde{\mc L'})^+ \supseteq (\mc L')^+$. Property (b) follows from Proposition~\ref{Bprime}(b).

We now prove (c). Suppose $x \in \Omega$, $i \in [4]$, and $y \in \Omega$ such that $\gamma(x) = (ijk)$ and $y_j - y_k \ge x_j - x_k$. First, note there is a covector $X_0 \in (\tilde{\mc L'})^+$ such that
\[
X_0(j,k,y_j-y_k) = \alpha_{kj} \qquad X_0(k,l,x_k-x_l) = \alpha_{lk} \qquad X_0(l,j,x_l-x_j) = \alpha_{jl}.
\]
Indeed, if $y = x$, then this holds because $o_{\gamma(x)}(\{j,k,l\}) = (jkl)$, and if $y \neq x$, this holds because $y_j - y_k > x_j - x_k$ and the properties of $\mc B$. Thus, by Proposition~\ref{peak}, there exists $X \in (\tilde{\mc L'})^+$ such that
\[
X(j,k,y_j-y_k) = 0 \qquad X(k,l,x_k-x_l) = 0 \qquad X(l,j,x_l-x_j) = 0
\]
and for all $z \in S_i(x)$ and $p \in \{j,k,l\}$,
\[
X(i,p,z_i-z_p) = \alpha_{ip} \cdot \beta_{jkl}(y_j-y_k,x_k-x_l,x_l-x_j).
\]
We have
\[
\beta_{jkl}(y_j-y_k,x_k-x_l,x_l-x_j) = \sign \left( \alpha_{jk}u_{(j,k,y_j-y_k)}^i + \alpha_{kl}u_{(k,l,x_k-x_l)}^i + \alpha_{lj}u_{(l,j,x_l-x_j)}^i \right).
\]
By the definition of $\Omega$, we have $\sign(\alpha_{kl}u_{(k,l,x_k-x_l)}^i) = \sign(\alpha_{lj}u_{(l,j,x_l-x_j)}^i) = \beta_i(x)$. So the right hand side of this expression is $\beta_i(x)$. Hence,
\[
X(i,p,z_i-z_p) = \alpha_{ip} \cdot \beta_i(x)
\]
as desired.
\end{proof}

\begin{prop} \label{Gclosed}
Suppose $M = (E \cup \{f\}, \mc L)$ and $\flip M = (E \cup \{f\}, \flip{\mc L})$ are uniform liftings of $\tilde{M}$ and there is a flip $M^0$ between $M$ and $\flip M$. If $M \in G^\dagger$, then $\flip M \in G^\dagger$.
\end{prop}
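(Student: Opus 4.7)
The plan is to track, witness by witness, how the flip between $M$ and $\flip M$ affects the cocircuits required by Definition~\ref{Gdag}. By Proposition~\ref{flipdef} applied to these rank-$4$ uniform liftings, the flip is specified by four cocircuits $C_1, \ldots, C_4 \in \mc L$ with a common sign pattern on $E \minus D$, where $D = \{d_1, d_2, d_3, d_4\} \subseteq E$ is the flip support; since $\flip M$ is also a lifting we have $f \notin D$, so we may normalize $C_s(f) = +$ for all $s$. The cocircuit set of $\flip M$ is obtained from that of $M$ by replacing $\pm C_1, \ldots, \pm C_4$ with $\pm \flip C_1, \ldots, \pm \flip C_4$, where $\flip C_s$ has the same zero support $D \minus \{d_s\}$ as $C_s$, agrees with $C_s$ on $E \minus D$, and differs from $C_s$ only in sign at the single element $d_s$.

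Because $M$ and $\flip M$ are uniform of rank $4$, every cocircuit is determined up to sign by its $3$-element zero support. So for each witness $X$ required by Definition~\ref{Gdag}(a), (b), or (c), with zero support $Z$, either $Z \ne D \minus \{d_s\}$ for every $s$, in which case $X$ is unchanged and still witnesses the property in $\flip M$; or $Z = D \minus \{d_s\}$ for some $s$, in which case $X$ is replaced by $\flip C_s$ (with the unique $f$-positive normalization), and the new candidate agrees with $X$ at every prescribed position except possibly at $d_s$. Hence each property is preserved unless the prescribed nonzero position of a witness coincides with the sign-changed element $d_s$.

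For each of (a), (b), (c), I handle the remaining subcases by direct case analysis: the requirement that $d_s$ coincide with a specific prescribed nonzero position (e.g.\ $(j,k,y_j-y_k)$ for $X_1$ in (a)) determines $D$ uniquely from the parameters of the witness, so there are only finitely many potentially problematic flip supports. In each such subcase, I invoke Proposition~\ref{flipblock} together with the other witnesses guaranteed by $M \in G^\dagger$ to produce an alternative witness for $\flip M$: either a cocircuit of $M$ with a different $3$-element zero support (unaffected by the flip), or a derived cocircuit showing that the hypothesized flip support cannot be a valid flip quadruple, contradicting its existence in $M$.

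The hard part will be property (c), whose single witness $X$ carries many sign constraints $X(i, p, z_i - z_p) = \alpha_{ip} \beta_i(x)$ ranging over $z \in S_i(x)$ and $p \in \{j, k, l\}$. Here the key ingredients are the sign conditions on the $u$-variables built into the definition of $\Omega$ (which pin down $\beta_i(x)$ consistently on a neighborhood of $x$) and the geometric logic of Proposition~\ref{peak}, which already underlies the proof of Proposition~\ref{Gnonempty}. I expect these to imply that any flip support $D$ that could flip a (c)-witness at some position $(i,p,z_i-z_p)$ is incompatible with cocircuits simultaneously required by property (c) at neighbouring parameters in $\Omega$, ruling out such a flip entirely and completing the verification.
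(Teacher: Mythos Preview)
Your first two paragraphs are correct and match the paper exactly: the flip changes only four cocircuits (and their negatives), each required witness is determined by its three-element zero support, and a witness can fail in $\flip M$ only when its zero support equals $D\setminus\{d_s\}$ and its single prescribed nonzero sign sits at $d_s$. This correctly pins down, for each potentially failing witness, a unique candidate flip support $D$.

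The genuine gap is in how you propose to rule those candidate supports out. The paper's argument has a tight \emph{cyclic} structure that you do not identify: the survival of (a) is proved using the (b)-witness together with Proposition~\ref{flipblock}; the survival of (b) is proved using the (c)-witness (one shows that both the (b)-witness $X$ and the (c)-witness $Y$ are among the flip cocircuits, and then exhibits an element outside $D$ on which they disagree, contradicting Proposition~\ref{flipdef}); and the survival of (c) is proved by applying property (a) not at $x$ but at the point $z\in S_i(x)$, using that $\gamma(z)=(jkl)$ and that $z$ agrees with $x$ in the $j,k,l$ coordinates, and then recycling the already-established argument for (a).

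Your plan for (c) is where this goes wrong concretely. You propose to use Proposition~\ref{peak} and the sign conditions on the $u$-variables from the definition of $\Omega$. But Proposition~\ref{peak} is a statement about the particular lifting $\tilde M'$; it was used only in Proposition~\ref{Gnonempty}. In Proposition~\ref{Gclosed}, $M$ is an \emph{arbitrary} element of $G^\dagger$, so the only available tools are the combinatorial conditions (a), (b), (c) themselves (together with the fact that $S_i(x)\neq\emptyset$, which is where $\Omega$ enters). There is no realization of $M$ to appeal to, and ``neighbouring parameters in $\Omega$'' do not produce cocircuits of $M$ on their own. The correct move is to rewrite the candidate support using $z$'s coordinates (since $x_k-x_l=z_k-z_l$ and $x_l-x_j=z_l-z_j$) and then invoke (a) at $z$ with the roles of $i$ and $l$ swapped; this yields three cocircuits that would all have to be involved in the flip, which is excluded by the same contradiction already used for (a).
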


\begin{proof}
We need to show that Definition~\ref{Gdag}(a)-(c) hold for $\flip M$.

\emph{Proof of (a).} Suppose there is some $x \in \Omega$, $i \in [4]$, $y \in \Omega$, and $p \in \{i,j,k\}$ such that $\gamma(x) = (ijk)$, $y_j-y_k \ge x_j-x_k$, and $\flip{\mc L}^+$ does not contain three cocircuits satisfying (a). Since $\mc L^+$ does contain such cocircuits $X_1$, $X_2$, and $X_3$, the flip $M^0$ must have support
\[
\{ (j,k,y_j-y_k), (k,i,x_k-x_i), (i,j,x_i-x_j), (p,l,x_p-x_l)\}
\]
and this flip involves $X_1$, $X_2$, and $X_3$. However, by (b), there exists $X \in \mc L^+$ with $X(j,k,y_j-y_k) = \alpha_{kj}$, $X(k,i,x_k-x_i) = \alpha_{ik}$,\footnote{This equality is obtained by taking $y = x$ in (b)(ii).} $X(i,j,x_i-x_j) = \alpha_{ji}$, and $X(p,l,x_p-x_l) = 0$. This contradicts Proposition~\ref{flipblock}, proving (a).

\emph{Proof of (b).} Suppose there is some $x \in \Omega$ with $\gamma(x) = (ijk)$ such that $\flip{\mc L}^+$ does not contain a cocircuit satisfying (b). Since $\mc L^+$ does contain such a cocircuit $X$, we must have one of the following:
\begin{enumerate} \renewcommand{\labelenumi}{(\roman{enumi})}
\item For some $y \in \Omega$ with $y_j - y_k \ge x_j - x_k$, $M^0$ has support
\[
\{ (j,k,y_j-y_k), (i,l,x_i-x_l), (j,l,x_j-x_l), (k,l,x_k-x_l) \}.
\]
\item For some $y \in \Omega$ with $y_k - y_i \ge x_k - x_i$, $M^0$ has support
\[
\{ (k,i,y_k-y_i), (i,l,x_i-x_l), (j,l,x_j-x_l), (k,l,x_k-x_l) \}.
\]
\item For some $y \in \Omega$ with $y_i - y_j \ge x_i - x_j$, $M^0$ has support
\[
\{ (i,j,y_i-y_j), (i,l,x_i-x_l), (j,l,x_j-x_l), (k,l,x_k-x_l) \}.
\]
\end{enumerate}
We will assume case (i); the other cases are analogous. The cocircuit $X$ must be involved in the flip $M^0$. By (c), there is a cocircuit $Y \in \mc L^+$ satisfying $Y(j,k,y_j-y_k) = 0$, $Y(k,l,x_k-x_l) = 0$, and $Y(j,l,x_j-x_l) = 0$, and hence it is also involved in this flip. Suppose $z \in S_i(x)$. If $\beta_i(x) = +$, then $z_i - z_j \ge x_i - x_j$ by definition of $S_i(x)$. Hence $X(i,j,z_i-z_j) = \alpha_{ji}$ by definition of $X$. But $Y(i,j,z_i-z_j) = \alpha_{ij}$ by definition of $Y$. This contradicts the fact that the cocircuits involved in a flip agree outside of the flip's support. Similarly, if $\beta_i(x) = -$, then $X(k,i,z_k-z_i) = \alpha_{ik}$ and $Y(k,i,z_k-z_i) = \alpha_{ki}$, a contradiction. Since $S_i(x)$ is nonempty, we have a contradiction, proving (b).

\emph{Proof of (c).} Suppose there is some $x \in \Omega$, $i \in [4]$, and $y \in \Omega$ such that $\gamma(x) = (ijk)$, $y_j - y_k \ge x_j - x_k$, and $\flip{\mc L}^+$ does not contain a cocircuit satisfying (c). Since $\mc L^+$ does contain such a cocircuit $X$, there must be some $z \in S_i(x)$ and $p \in \{j,k,l\}$ such that $M^0$ has support
\[
\{ (j,k,y_j-y_k), (k,l,x_k-x_l), (l,j,x_l-x_j), (i,p,z_i-z_p) \}.
\]
Since $z \in S_i(x)$, we have $x_k - x_l = z_k - z_l$ and $x_l - x_j = z_l - z_j$. So we can rewrite the support of the flip as
\[
\{ (j,k,y_j-y_k), (k,l,z_k-z_l), (l,j,z_l-z_j), (i,p,z_i-z_p) \}.
\]
Moreover, we have $y_j - y_k \ge x_j - x_k = z_j - z_k$, and since $z \in S_i(x)$, we have $\gamma(z) = (ljk)$. Thus, by (a) with $x$ replaced by $z$ and $i$ replaced by $l$, there are cocircuits $X_1$, $X_2$, $X_3 \in \mc L^+$ such that
\begin{alignat*}{3}
X_1(j,k,y_j-y_k) &= \alpha_{kj} \qquad& X_1(k,l,z_k-z_l) &= 0 	            \qquad& X_1(l,j,z_l-z_j) &= 0 \\
X_2(j,k,y_j-y_k) &= 0 	       \qquad& X_2(k,l,z_k-z_l) &= \alpha_{lk} \qquad& X_2(l,j,z_l-z_j) &= 0 \\
X_3(j,k,y_j-y_k) &= 0                \qquad& X_3(k,l,z_k-z_l) &= 0 		    \qquad& X_3(l,j,z_l-z_j) &= \alpha_{jl}
\end{alignat*}
and
\[
X_1(i,p,z_i-z_p) = 0 \qquad X_2(i,p,z_i-z_p) = 0 \qquad X_3(i,p,z_i-z_p) = 0.
\]
Thus, the flip $M_0$ must involve these three circuits. But we showed in the proof of (a) that there cannot be a flip involving these circuits. This proves (c).
\end{proof}

Thus, $G^\dagger$ is a nonempty connected component of $G(\tilde M)$. Since $\Omega$ is nonempty, it is easy to see that $G^\dagger$ is not all of $G(\tilde M)$. Thus $G(\tilde M)$ is disconnected.

\end{document}